\definecolor{Myblue}{rgb}{0,0,0.6}
\newcommand{\cut}{\operatorname{cut}}
\newcommand{\ax}{\operatorname{ax}}
\newtheorem{theorem}{Theorem}[section]
\newtheorem{thm}[theorem]{Theorem}
\newtheorem{proposition}[theorem]{Proposition}
\newtheorem{lemma}[theorem]{Lemma}
\newtheorem{corollary}[theorem]{Corollary}
\theoremstyle{definition}
\newtheorem{defn}[theorem]{Definition}
\newcommand{\tagarray}{\mbox{}\refstepcounter{equation}$(\theequation)$}
\newtheoremstyle{example}{\topsep}{\topsep}
	{}
	{}
	{\bfseries}
	{.}
	{2pt}
	{\thmname{#1}\thmnumber{ #2}\thmnote{ #3}}
	\theoremstyle{example}
	\newtheorem{example}[theorem]{Example}
	\newtheorem{remark}[theorem]{Remark}
\numberwithin{equation}{section}
\newcommand{\call}[1]{\mathcal{#1}}
\newcommand{\comment}[1]{}
\newcommand{\den}[1]{\llbracket #1 \rrbracket}
\def\be{\begin{equation}}
\def\ee{\end{equation}}
\begin{document}

\def\ScoreOverhang{1pt}

\makeatletter
\DeclareRobustCommand{\rvdots}{%
  \vbox{
    \baselineskip4\p@\lineskiplimit\z@
    \kern-\p@
    \hbox{}\hbox{.}\hbox{.}\hbox{.}
  }}
\makeatother

\newcommand{\proofvdots}[1]{\overset{\displaystyle #1}{\rvdots}}
\newcommand{\ud}{\mathrm{d}}
\newcommand{\Ress}[1]{\res_{#1}\!}
\newcommand{\cat}[1]{\mathcal{#1}}
\newcommand{\lto}{\longrightarrow}
\newcommand{\xlto}[1]{\stackrel{#1}\lto}
\newcommand{\md}[1]{\mathscr{#1}}
\def\l{\,|\,}
\newcommand{\bb}[1]{\mathbb{#1}}
\newcommand{\scr}[1]{\mathscr{#1}}
\def\LG{\mathcal{LG}}
\def\hilb{\mathcal{W}}

\title{Linear Logic and Quantum Error Correcting Codes}
\author{Daniel Murfet, William Troiani}

\maketitle

\begin{abstract}
We develop a point of view on reduction of multiplicative proof nets based on quantum error-correcting codes. To each proof net we associate a code, in such a way that cut-elimination corresponds to error correction.
\end{abstract}

\tableofcontents

\setlength{\epigraphwidth}{0.8\textwidth}
\epigraph{But as artificers do not work with perfect accuracy, it comes to pass that mechanics is so distinguished from geometry that what is perfectly accurate is called geometrical; what is less so, is called mechanical. However, the errors are not in the art, but in the artificers.}{I.~Newton, \textsl{Principia}}

\section{Introduction}

In the tradition of proof theory, the structure of proofs lies jointly in the deduction rules and reduction rules \cite{negri}. The former say which occurrences of atomic propositions are the same while the latter say which \emph{proofs} are the same. For example, the Axiom and Cut links in linear logic
\begin{center}
	\begin{tabular}{>{\centering}m{6cm} >{\centering}m{6cm} >{\centering}m{0.5cm}}
		$
		\begin{tikzcd}[column sep = small, row sep = small]
			& \ax\arrow[dl,bend right, dash]\arrow[dr,bend left, dash]\\
			\neg A & & A
		\end{tikzcd}$
		&
		$
			\begin{tikzcd}[column sep = small, row sep = small]
				\neg A\arrow[dr,bend right] & & A\arrow[dl, bend left]\\
				& \cut
			\end{tikzcd}$
		&
		\tagarray{\label{eq:intro_deduction_a}}
	\end{tabular}
\end{center}
are the analogues of deduction rules in that system and they say ``$A$ is $A$, and conversely'' \cite[\S 3.2.1]{girard_blind}. This statement can be unpacked in terms of equalities between (positive and negative) occurrences of atomic propositions in the occurrences of the formula $A$ \cite{algpnt}. On the other hand, the cut-elimination transformation
\begin{center}
	\begin{tabular}{ c c c }
		$
		\begin{tikzcd}[column sep = small, row sep = small]
			\vdots\arrow[d,dash] &&& \ax\arrow[dl, bend right, dash]\arrow[dr,dash, bend left]\\
			A\arrow[dr, bend right] && \neg A\arrow[dl,bend left] && A\arrow[d]\\
			& \cut &&& \vdots
		\end{tikzcd}
		$
		&\qquad $\Longrightarrow$ \qquad
		$
		\begin{tikzcd}[column sep = small, row sep = small]
			\vdots\arrow[d,dash]\\
			A\arrow[d]\\
			\vdots
		\end{tikzcd}
		$
		&
		\qquad \tagarray{\label{rule:ax_reduct}}
	\end{tabular}
\end{center}
is an example of a reduction rule. It says that two proof nets which differ only in this way have ``the same'' logical content. From one point of view the purpose of denotational semantics of linear logic is to represent the patterns of equality arising from deduction rules in other kinds of mathematical objects. The field has produced many interesting examples, but the patterns of equality \emph{between proofs} coded in the reduction rules have proven harder to represent in a natural way.

If proofs $\pi, \psi$ related by a reduction are represented by mathematical objects $\den{\pi}, \den{\psi}$ should these objects be equal, or perhaps just isomorphic? In light of the Curry-Howard correspondence, which links the reduction process to computation \cite{howard,gmz} it seems desirable to find mathematical models in which this question has a natural and convincing answer. This desire was expressed as a coherent program in Girard's work on \emph{geometry of interaction} \cite{girard_goi}, which outlines a kind of mathematical model in which proofs are modelled by finite dynamical systems and computation is modelled by interaction. In this paper we give a simple realisation of these ideas for multiplicative proof nets, using finite-dimensional quantum systems called quantum error-correcting codes.
\\

A \emph{finite quantum system} is a pair $(\mathscr{H}, H)$ consisting of a finite-dimensional Hilbert space $\mathscr{H}$ and a self-adjoint operator $H$, the Hamiltonian, which is the generator of dynamics in the sense that the unitary operator $U(t) = \exp(-iht)$ gives the time evolution of states. In modern physics it is common to engineer novel quantum systems with desirable properties by carefully designing the Hamiltonian and then expending work to lower the system into a ground state whose properties derive from the information in $H$.

In a famous example due to Kitaev \cite{kitaev} the information in $H$ is a lattice representation of a torus, and the ground state of the quantum system has topologically protected order derived from the (co)homology of the torus; this is the famous \emph{toric code}. This part of physics provides us with an interesting example of a situation in which some information (the structure in $H$) is implicit and in order to manifest the order in the real world, that is, to make that implicit information explicit, you must perform work. In the case of the toric code, the work you perform is called \emph{quantum error-correction} \cite{nc}.
\\

In Section \ref{sec:Hilbert_Space_model} we assign to each proof $\pi$ in multiplicative linear logic a Hilbert space $\call{H}_\pi$, quantum error-correcting code $S_{\pi}$ and Hamiltonian $H_\pi = - \sum_{X \in S_{\pi}} X$. We think of
\[
	\den{\pi} = \big( \call{H}_\pi, S_\pi \big)
\]
as either the quantum error-correcting code or the associated finite quantum system with Hamiltonian $H_\pi$. An observer interacting with this system at high energy will be unable to extract the structure of the proof $\pi$ by analysing the statistics of their measurements. However, if we expend work to reduce the energy of the system far enough that the state $\ket{\Psi}$ lies in the (degenerate) space of ground states of $H_\pi$ (or what is the same, the code space of $S_\pi$) then the proof $\pi$ may be learned from measurements; see Remark \ref{remark:entangled_measure}. An abstract representation of the expenditure of work in this cooling process is our model of the process of computation.

For multiplicative proof nets the reduction rules are cut-elimination transformations. Given a proof $\pi_1 \l \pi_2$ obtained by cutting together a conclusion $A$ of $\pi_1$ with a conclusion $\neg A$ of $\pi_2$ a cut-elimination transformation produces a proof $\psi$ with a ``simpler'' arrangement of cuts. The question can then be be restated: can we find natural mathematical models $\den{-}$ of proofs, in which there is an interesting relationship among $\den{ \pi_1 \l \pi_2 }, \den{ \pi_1 }, \den{ \pi_2 }$ and $\den{\psi}$? In the situation described, we have systems
\begin{equation}
	(\mathscr{H}_1, H_1)\,, \quad (\mathscr{H}_2, H_2)\,,\quad (\mathscr{H}_{\psi}, H_\psi)\,.
\end{equation}
Assigned to $\pi = \pi_1 \l \pi_2$ is a system whose Hilbert space and Hamiltonian are
\begin{equation}
	\big( \mathscr{H}_1 \otimes \mathscr{H}_{cut} \otimes \mathscr{H}_2, H_1 + H_2 + H_{cut}\big)
\end{equation}
where $\mathscr{H}_{cut}$ is a system of qubits associated to the Cut link and $H_{cut}$ couples the degrees of freedom in the two systems to these qubits (and thus indirectly to each other). That is, $H_{cut}$ is the (only) source of interactions between the systems $\den{\pi_1}, \den{\pi_2}$. When two systems like this are brought together and made to interact the composite system will not usually be in a ground state. The states we can easily prepare are product states
\begin{equation}\label{eq:composite_state_0}
	\ket{\Psi_1} \otimes \ket{0} \otimes \ket{\Psi_2} \in \mathscr{H}_1 \otimes \mathscr{H}_{cut} \otimes \mathscr{H}_2\,.
\end{equation}
Work is required to cool the system from such a product state to a ground state of the interacting Hamiltonian $H_1 + H_2 + H_{cut}$. Even if $\ket{\Psi_1}$ and $\ket{\Psi_2}$ are ground states of their respective Hamiltonians (that is, they ``represent'' $\pi_1, \pi_2$) the product state \eqref{eq:composite_state_0} does not represent $\pi_1 \l \pi_2$ in the sense explained above. This motivates, from the physical point of view, the study of processes for cooling a state like \eqref{eq:composite_state_0}.

In Section \ref{section:reductions} we associate to each reduction $\gamma$ of the proof net $\pi$, for example $\pi_1 \l \pi_2 \rightsquigarrow \psi$, a cooling procedure $P_\gamma$. To each such procedure is associated the set of states
\[
V_\gamma \subseteq \mathscr{H}_1 \otimes \mathscr{H}_{cut} \otimes \mathscr{H}_2
\]
which are left invariant $P_\gamma\ket{\alpha} = \ket{\alpha}$. This subspace is closed under the interacting Hamiltonian, and the mathematical relationship between $\den{\pi} = \den{\pi_1 \l \pi_2}$ and $\den{\psi}$ which we put forward as representing the reduction $\gamma$ is an isomorphism of systems
\begin{equation}\label{eq:iso_of_systems}
(\mathscr{H}_{\psi}, H_\psi) \cong \big(V_\gamma, (H_1 + H_2 + H_{cut})|_{V_\gamma} \big)\,.
\end{equation}
More precisely, we give an explicit linear map
\[
T_\gamma: \mathscr{H}_\psi \lto \mathscr{H}_1 \otimes \mathscr{H}_{cut} \otimes \mathscr{H}_2
\]
which is an isometry onto the subspace $V_\gamma$, and which satisfies
\[
\big( H_1 + H_2 + H_{cut} \big) \circ T_\gamma = T_\gamma \circ H_{\psi}\,.
\]
It is interesting to note that this ``semantics'' of $\gamma$ has two parts:
\begin{itemize}
\item[(i)] Cooling the system $\den{\pi_1 \l \pi_2}$. This is a physical interaction, the details of which are not modelled here. Mathematically it can be represented (at a first brush) by $P_\gamma$.
\item[(ii)] Recognising $\den{\psi}$ as being isomorphic to the cooled system, by encoding the states and transitions of $\den{\psi}$ in $\den{\pi}$. This can be thought of as more information theoretic (and subjective) since it relates to our \emph{model} of the system and allows us to replace our complex model by a simpler one. Mathematically it is represented by $T_\gamma$.
\end{itemize}
A sequence of reductions leading from a proof $\pi$ to a normal form $\widetilde{\pi}$
\[
\xymatrix@C+2pc{
\pi = \pi_0 \ar@{~>}[r]^{\gamma_0} & \pi_1 \ar@{~>}[r]^{\gamma_1} & \quad \cdots \quad \ar@{~>}[r]^{\gamma_{n-1}} & \pi_n = \widetilde{\pi}
}
\]
corresponds to a diagram of nested Hilbert spaces
\begin{center}
\includegraphics[width=0.5\textwidth]{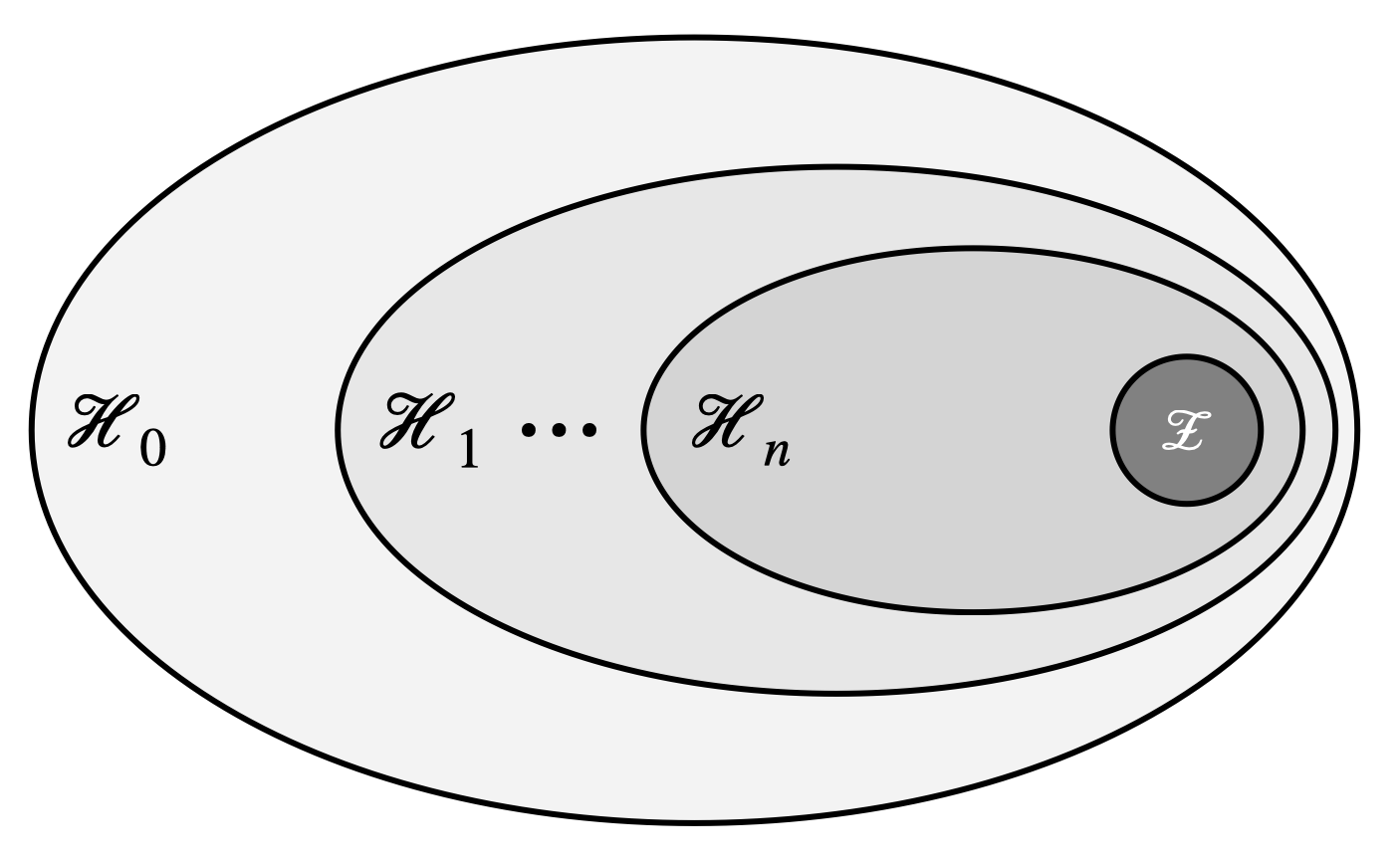}
\end{center}
where we identify each $\call{H}_i = \call{H}_{\pi_i}$ with a subspace of $\call{H}_{i-1}$ via $T_{\gamma_{i-1}}$ and $\call{Z}$ denotes the common space of ground states for the systems $\den{\pi_i}$. Note that even for the cut-free proof $\widetilde{\pi}$ we have that $\call{Z} \neq \call{H}_n$.



\section{Background}\label{section:background}

\subsection{Proofs}\label{sec:proof_nets}
\begin{defn}\label{def:formulas}
	There is an infinite set of \emph{unoriented atoms} $X,Y,Z,...$ and an \emph{oriented atom} (or \emph{atomic proposition}) is a pair $(X,+)$ or $(X,-)$ where $X$ is an unoriented atom. The set of \emph{pre-formulas} is defined as follows.
	\begin{itemize}
		\item Any atomic proposition is a pre-formula.
		\item If $A,B$ are pre-formulas then so are $A \otimes B$, $A \parr B$.
		\item If $A$ is a pre-formula then so is $\neg A$.
	\end{itemize}
	The set of \emph{formulas} is the quotient of the set of pre-formulas by the equivalence relation $\sim$ generated by, for arbitrary formulas $A,B$ and unoriented atom $X$, the following
	\[
		\neg (A \otimes B) \sim \neg A \parr \neg B,\qquad \neg (A \parr B) \sim \neg A \otimes \neg B,\qquad \neg (X, \pm) \sim (X, \mp)\,.
	\]
\end{defn}

\begin{defn}\label{def:proof_structures}
	A \emph{proof structure} is a directed multigraph with edges labelled with formulas (Definition \ref{def:formulas}) and with vertices labelled with an element from $\lbrace \ax, \cut, \otimes, \parr, \operatorname{c} \rbrace$. A proof structure may not admit any loops (however it may admit cycles). The incoming edges of a vertex are called its \emph{premisses}, the outgoing edges are its \emph{conclusions}. Proof structures are required to adhere to the following conditions.
	\begin{itemize}
		\item Each vertex labelled $\ax$ has exactly two conclusions and no premisse, the conclusions are labelled $A$ and $\neg A$ for some $A$.
		\item Each vertex labelled $\cut$ has exactly two premisses and no conclusion, where the premisses are labelled $A$ and $\neg A$ for some $A$. 
		\item Each vertex labelled $\otimes$ has exactly two premisses and one conclusion. These two premisses are ordered. The smallest one is called the \emph{left} premise of the vertex, the biggest one is called the \emph{right} premise. The left premise is labelled $A$, the right premise is labelled $B$ and the conclusion is labelled $A \otimes B$, for some $A,B$.
		\item Each vertex labelled $\parr$ has exactly two ordered premisses and one conclusion. The left premise is labelled $A$, the right premise is labelled $B$ and the conclusion is labelled $A \parr B$, for some $A,B$.
		\item Each vertex labelled $\operatorname{c}$ has exactly one premise and no conclusion. Such a premise of a vertex labelled $\operatorname{c}$ is called the \emph{conclusion} of the proof structure.
	\end{itemize}
	Let $\pi$ be a proof structure. A \emph{conclusion link} consists of a vertex labelled $\operatorname{c}$ along with its premise. An \emph{axiom link} of $\pi$ is a subgraph consisting of a vertex labelled $\ax$ along with its conclusions. A $\cut$ link consists of a vertex labelled $\cut$ along with its premises. A \emph{tensor link} of $\pi$ consists of a vertex labelled $\otimes$ along with its premises and conclusion. A \emph{par link} consists of a vertex labelled $\parr$ along with its premises and conclusion.
	\[\begin{tikzcd}[column sep = small, row sep = small]
		\vdots && \ax && \vdots && \vdots \\
		A & {\neg A} && A & {\neg A} && A \\
		{\operatorname{c}} & \vdots && \vdots && \cut \\
		& \vdots && \vdots & \vdots && \vdots \\
		& A && B & A && B \\
		&& \otimes &&& \parr \\
		&& {A \otimes B} &&& {A \parr B} \\
		&& \vdots &&& \vdots
		\arrow[curve={height=12pt}, no head, from=1-3, to=2-2]
		\arrow[curve={height=-12pt}, no head, from=1-3, to=2-4]
		\arrow[from=2-2, to=3-2]
		\arrow[from=2-4, to=3-4]
		\arrow[no head, from=4-2, to=5-2]
		\arrow[curve={height=12pt}, from=5-2, to=6-3]
		\arrow[curve={height=-12pt}, from=5-4, to=6-3]
		\arrow[no head, from=4-4, to=5-4]
		\arrow[no head, from=6-3, to=7-3]
		\arrow[from=7-3, to=8-3]
		\arrow[no head, from=4-5, to=5-5]
		\arrow[no head, from=4-7, to=5-7]
		\arrow[curve={height=12pt}, from=5-5, to=6-6]
		\arrow[curve={height=-12pt}, from=5-7, to=6-6]
		\arrow[no head, from=6-6, to=7-6]
		\arrow[from=7-6, to=8-6]
		\arrow[no head, from=1-1, to=2-1]
		\arrow[from=2-1, to=3-1]
		\arrow[no head, from=1-5, to=2-5]
		\arrow[no head, from=1-7, to=2-7]
		\arrow[curve={height=-12pt}, from=2-7, to=3-6]
		\arrow[curve={height=12pt}, from=2-5, to=3-6]
	\end{tikzcd}\]
\end{defn}

Abusing notation slightly, we will conflate links with the vertices of the proof structure.

\begin{defn}
	A subgraph of a proof structure $\pi$ of the following form is an \emph{$a$-redex}.
	\begin{center}
		\begin{tabular}{ c c c }
			$
			\begin{tikzcd}[column sep = small, row sep = small]
				\vdots\arrow[d,dash] &&& \ax\arrow[dl, bend right, dash]\arrow[dr,dash, bend left]\\
				A\arrow[dr, bend right] && \neg A\arrow[dl,bend left] && A\arrow[d]\\
				& \cut &&& \vdots
			\end{tikzcd}
			$
			&
			\tagarray{\label{eq:a_redex}}
		\end{tabular}
	\end{center}
	A subgraph of a proof structure of the following form is an \emph{$m$-redex}.
	\begin{center}
		\begin{tabular}{ c c }
			$
			\begin{tikzcd}[column sep = small, row sep = small]
				{} && {\vdots} && {\vdots} && {\vdots} && {\vdots} \\
				&& A && B && {\neg A} && {\neg B} \\
				&&& \otimes &&&& \parr \\
				&&& {A \otimes B} &&&& {\neg A \parr \neg B} \\
				&&&&& \cut
				\arrow[curve={height=12pt}, from=2-3, to=3-4]
				\arrow[curve={height=-12pt}, from=2-5, to=3-4]
				\arrow[curve={height=12pt}, from=2-7, to=3-8]
				\arrow[curve={height=-12pt}, from=2-9, to=3-8]
				\arrow[no head, from=1-7, to=2-7]
				\arrow[no head, from=1-9, to=2-9]
				\arrow[no head, from=3-8, to=4-8]
				\arrow[curve={height=12pt}, from=4-4, to=5-6]
				\arrow[curve={height=-12pt}, from=4-8, to=5-6]
				\arrow[no head, from=3-4, to=4-4]
				\arrow[no head, from=1-3, to=2-3]
				\arrow[no head, from=1-5, to=2-5]
			\end{tikzcd}
			$
			&
			\tagarray{\label{eq:m_redex}}
		\end{tabular}
	\end{center}
\end{defn}

\begin{defn}\label{def:reduction}
	Given a multiplicative linear logic proof structure $\pi$ admitting an $a$-redex $\zeta$ of the form given on the left in \eqref{rule:ax_reduct}, the reduction of $\pi$ is the proof $\pi'$ given by replacing the subgraph of $\pi$ on the left by what is displayed on the right in \eqref{rule:ax_reduct}.
	\begin{center}
		\begin{tabular}{ c c c }
			$
			\begin{tikzcd}[column sep = small, row sep = small]
				\vdots\arrow[d,dash] &&& \ax\arrow[dl, bend right, dash]\arrow[dr,dash, bend left]\\
				A\arrow[dr, bend right] && \neg A\arrow[dl,bend left] && A\arrow[d]\\
				& \cut &&& \vdots
			\end{tikzcd}
			$
			&
			$
			\begin{tikzcd}[column sep = small, row sep = small]
				\vdots\arrow[d,dash]\\
				A\arrow[d]\\
				\vdots
			\end{tikzcd}
			$
			&
			\tagarray{\label{rule:ax_reduct2}}
		\end{tabular}
	\end{center}
	Similarly if $\pi$ admits an $m$-redex, the associated reduction of $\pi$ is the proof $\pi'$ given by replacing the the subgraph of $\pi$ above by the that below in \eqref{rule:tensor_reduct}:
	\begin{center}
		\begin{tabular}{ c c }
			$\begin{tikzcd}[column sep = small, row sep = small]
				\vdots && \vdots && \vdots && \vdots \\
				A && B && {\neg A} && {\neg B} \\
				& \otimes &&&& \parr \\
				& {A \otimes B} &&&& {\neg A \parr \neg B} \\
				&&& \cut
				\arrow[from=2-5, to=3-6]
				\arrow[from=2-7, to=3-6]
				\arrow[from=3-6, to=4-6, dash]
				\arrow[from=3-2, to=4-2, dash]
				\arrow[from=2-1, to=3-2]
				\arrow[from=2-3, to=3-2]
				\arrow[from=1-1, to=2-1, dash]
				\arrow[from=1-3, to=2-3, dash]
				\arrow[from=1-5, to=2-5, dash]
				\arrow[from=1-7, to=2-7, dash]
				\arrow[from=4-2, to=5-4, bend right]
				\arrow[from=4-6, to=5-4, bend left]
			\end{tikzcd}$\\
			& \tagarray{\label{rule:tensor_reduct}}\\
			$\begin{tikzcd}[column sep = small, row sep = small]
				\vdots && \vdots && \vdots && \vdots \\
				A && B && {\neg A} && {\neg B} \\
				& \cut &&&& \cut
				\arrow[from=1-1, to=2-1, dash]
				\arrow[from=1-3, to=2-3, dash]
				\arrow[from=1-5, to=2-5, dash]
				\arrow[from=1-7, to=2-7, dash]
				\arrow[from=2-1, to=3-2, bend right]
				\arrow[from=2-5, to=3-2, bend left]
				\arrow[from=2-3, to=3-6, bend right]
				\arrow[from=2-7, to=3-6, bend left]
			\end{tikzcd}$
		\end{tabular}
	\end{center}
	A \emph{reduction} $\gamma: \pi \lto \pi'$ is a pair of proof structures $(\pi,\pi')$ where $\pi'$ is the result of applying one of the reduction rules just described to $\pi$. We write $\pi \lto_{\cut} \pi'$ when $(\pi,\pi')$ is a reduction. A \emph{reduction sequence} $\Gamma: \pi \lto \pi'$ is a nonempty sequence of reductions.
\end{defn}


\subsection{Exterior algebras}\label{section:exterior_alg}

Associated to a vector space $V$ over the complex numbers $\bb{C}$ are three canonical $\bb{C}$-algebras: the tensor algebra (which is universal among not-necessarily commutative algebras with a map from $V$), the symmetric algebra $\operatorname{Sym}(V)$ (which is universal among all commutative algebras with a map from $V$) and the exterior algebra $\bigwedge V$ (see below for the universal property). All our algebras are associative and unital. These examples of algebras are all $\mathbb{Z}$-graded, in the sense that vectors may be written uniquely as a sum of \emph{homogeneous} vectors of degrees $d \in \mathbb{Z}$ where the degree is additive with respect to multiplication. The subspace of elements of degree $d$ in the symmetric and exterior algebras are denoted $\operatorname{Sym}^k(V)$ and $\bigwedge^k V$ respectively.

The exterior algebra
\begin{equation}\label{eq:wpofgfcvs}
	\bigwedge (\bb{C}\psi_1 \oplus \ldots \oplus \bb{C}\psi_n)
	\end{equation}
of the vector space $\bb{C}\underline{\psi} = \bb{C}\psi_1 \oplus \ldots \oplus \bb{C}\psi_n$ has a $\bb{C}$-basis consisting of wedge products
\begin{equation}\label{eq:exterior_basis}
	\psi_{i_1} \wedge \cdots \wedge \psi_{i_r} \qquad 1 \le i_1 < \cdots < i_r \le n
\end{equation}
with the empty wedge product being the identity element of the algebra. Here $\wedge$ denotes the multiplication in the algebra, which is associative and unital but not commutative. The fact that the wedge products \eqref{eq:exterior_basis} form a basis may be deduced from the fact that for vector spaces $V, W$ there is a natural isomorphism
\begin{equation}\label{eq:bigwedge_is_exp}
	\bigwedge( V \oplus W ) \cong \bigwedge V \otimes \bigwedge W
\end{equation}
and the fact that
\begin{equation}\label{eq:qubit}
	\bigwedge( \bb{C} \psi ) \cong \bb{C}1 \oplus \bb{C} \psi \cong \mathbb{C}^2
\end{equation}
by which we mean that the exterior algebra on a one-dimensional vector space has a basis consisting of the empty wedge product $1$ and the singleton wedge product $\psi$. The exterior algebra is naturally $\mathbb{Z}$-graded, where we give a wedge product \eqref{eq:exterior_basis} degree $r$. By definition $\psi \wedge \varphi = -\varphi \wedge \psi = 0$ for any $\psi, \varphi \in \bb{C} \underline{\psi}$ and so in particular $\psi \wedge \psi = 0$.

One of the universal properties of the exterior algebra involves $\mathbb{Z}_2$-graded algebras: a $\mathbb{Z}_2$-grading on an algebra $C$ is a decomposition $C = C_0 \oplus C_1$ with $1 \in C_0$ and $xy = (-1)^{|x||y|} yx$ whenever $x,y$ are homogeneous. We call the vectors $x \in C_0$ \emph{even} and the vectors $y \in C_1$ \emph{odd}. The exterior algebra has an obvious $\mathbb{Z}_2$-grading obtained from its natural $\mathbb{Z}$-grading
\begin{equation}
	\bigwedge V = \Big[ \bigoplus_{i \in 2 \mathbb{N}} \bigwedge^i V \Big] \oplus  \Big[ \bigoplus_{i \in 2 \mathbb{N} + 1} \bigwedge^{i} V \Big]\,.
\end{equation}
A \emph{morphism} $f: B \lto C$ of $\mathbb{Z}_2$-graded algebras is a morphism of algebras with $f(x) \in C_i$ for all $x \in C_i$, where $i \in \{0,1\}$.

\begin{proposition}\label{prop:univ_prop_exterior} If $\theta: V \lto C$ is a linear map where $C$ is a $\mathbb{Z}_2$-graded algebra and $\alpha(v)$ is odd for all $v \in V$, then there is a unique morphism of $\mathbb{Z}_2$-graded algebras $\Theta: \bigwedge V \lto C$ making the diagram
\begin{equation}
\xymatrix@C+2pc{
	V \ar[r]\ar[dr]_-{\theta} & \bigwedge V \ar@{.>}[d]^{\Theta}\\
	& C
}
\end{equation}
commute, where the horizontal map is the canonical inclusion.
\end{proposition}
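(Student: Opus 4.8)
The plan is to build $\Theta$ by the standard route through the tensor algebra and then descend to the quotient. Recall that $\bigwedge V$ is the quotient $T(V)/I$ of the tensor algebra $T(V) = \bigoplus_{n \ge 0} V^{\otimes n}$ by the two-sided ideal $I$ generated by the elements $v \otimes v$ for $v \in V$; this is precisely the presentation that produces the relation $\psi \wedge \psi = 0$ and the basis \eqref{eq:exterior_basis}. First I would invoke the universal property of $T(V)$ as the free associative unital algebra on $V$: the linear map $\theta$ extends uniquely to an algebra homomorphism $\widetilde{\theta}: T(V) \lto C$ with $\widetilde{\theta}(v_1 \otimes \cdots \otimes v_n) = \theta(v_1)\cdots\theta(v_n)$, and in particular $\widetilde{\theta}$ restricts to $\theta$ on $V \subseteq T(V)$.

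The key step is to show that $\widetilde{\theta}$ annihilates $I$, so that it descends to $\bigwedge V$. This is where the hypotheses are used. Since the paper's notion of $\mathbb{Z}_2$-graded algebra builds in supercommutativity $xy = (-1)^{|x||y|}yx$, any odd element $x \in C_1$ satisfies $x^2 = (-1)^{1\cdot 1} x^2 = -x^2$, whence $2x^2 = 0$ and therefore $x^2 = 0$ because we work over $\mathbb{C}$. As $\theta(v)$ is odd for every $v \in V$ by assumption, we get $\widetilde{\theta}(v \otimes v) = \theta(v)^2 = 0$; since $\widetilde{\theta}$ is an algebra homomorphism it then kills every element $a(v\otimes v)b$, hence all of $I$. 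Thus $\widetilde{\theta}$ factors as $\widetilde{\theta} = \Theta \circ q$ for a unique algebra homomorphism $\Theta: \bigwedge V \lto C$, where $q: T(V) \to \bigwedge V$ is the quotient map; composing with the inclusion $V \hookrightarrow \bigwedge V$ gives $\Theta \circ \iota = \theta$, so the triangle commutes.

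Two checks remain. To see that $\Theta$ respects the $\mathbb{Z}_2$-grading, note that a decomposable element $v_1 \wedge \cdots \wedge v_r \in \bigwedge^r V$ is sent to the product $\theta(v_1)\cdots\theta(v_r)$ of $r$ odd elements, which lies in $C_{r \bmod 2}$; since $\bigwedge^r V$ lands in the even (respectively odd) part of $\bigwedge V$ exactly when $r$ is even (respectively odd), and such decomposables span $\bigwedge^r V$, the map $\Theta$ is graded. Uniqueness is immediate: $V$ generates $\bigwedge V$ as an algebra, so any algebra homomorphism agreeing with $\theta$ on $V$ is forced on all products and hence coincides with $\Theta$.

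I do not expect a genuine obstacle here; the one point that must be handled with care, rather than taken for granted, is the identity $\theta(v)^2 = 0$. It is tempting to think this needs the target to be an exterior algebra, but in fact it follows purely from the supercommutativity clause in the definition of $\mathbb{Z}_2$-graded algebra together with $\operatorname{char}\mathbb{C} = 0$. Making explicit that this hypothesis is exactly what lets $\widetilde{\theta}$ descend through the defining relations of $\bigwedge V$ is the crux of the argument.
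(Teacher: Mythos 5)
Your proposal is correct and follows exactly the route of the paper's proof: extend $\theta$ to the tensor algebra by its universal property, check that the resulting homomorphism kills the ideal generated by $v \otimes v$ (the step the paper calls ``easily checked,'' which you rightly reduce to $x^2 = -x^2$ for odd $x$ in a supercommutative algebra over $\mathbb{C}$), and descend to $\bigwedge V$. The additional verifications of gradedness and uniqueness are standard and correctly handled.
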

\begin{proof}
The universal property of the tensor algebra $TV$ gives a morphism of algebras $TV \lto C$, and it is easily checked that it vanishes on the ideal that defines the exterior algebra as a quotient of the tensor algebra.
\end{proof}

An arbitrary linear operator on the exterior algebra \eqref{eq:wpofgfcvs} can be described as linear combinations of products of \emph{wedge product} and \emph{contraction} operators, which we now describe. Given $\varphi \in \bb{C}\underline{\psi}$ the operator of left multiplication
\begin{gather*}
	\varphi \wedge (-): \bigwedge \bb{C} \underline{\psi} \lto \bigwedge\bb{C} \underline{\psi}\\
	\psi_{i_1} \wedge \hdots \wedge \psi_{i_r} \longmapsto \varphi \wedge \psi_{i_1} \wedge \hdots \wedge \psi_{i_r}
\end{gather*}
increases the $\mathbb{Z}$-degree by one. Given a dual vector $\eta \in (\bb{C} \underline{\psi})^*$ the operation of contraction
\begin{gather*}
	\eta \lrcorner (-): \bigwedge \bb{C}\underline{\psi} \lto \bigwedge \bb{C}\underline{\psi}\\
	\psi_{i_1} \wedge \hdots \wedge \psi_{i_r} \lto \sum_{j = 1}^r (-1)^{j-1}\eta(\psi_{i_j})\psi_{i_1} \wedge \hdots \wedge \hat{\psi}_{i_j} \wedge \hdots \wedge \psi_{i_r}
\end{gather*}
is $\bb{C}$-linear and decreases the $\mathbb{Z}$-degree by one.

To each generator $\psi_i$ is associated both a wedge product operator $\psi_i \wedge (-)$ and also a contraction operator $(\psi_i)^* \lrcorner (-)$ where $(\psi_i)^*$ is defined by $(\psi_i)^*(\psi_j) = \delta_{ij}$. Where it will not cause confusion we conflate these two operators with $\psi_i$ and $\psi_i^*$ themselves, writing
\begin{equation}
\psi_i = \psi_i \wedge (-)\,, \qquad \psi_i^* = (\psi_i)^* \lrcorner (-)\,.
\end{equation}
These operators satisfy the canonical anti-commutation relations
\begin{align}
\psi_i \psi_j + \psi_j \psi_i &= 0 & 1 \le i,j \le n\\
\psi_i^* \psi_j^* + \psi_j^* \psi_i^* &= 0 & 1 \le i,j \le n\\
\psi_i \psi_j^* + \psi_j^* \psi_i &= \delta_{ij} & 1 \le i,j \le n
\end{align}
If we define $\langle \psi_i, \psi_j \rangle = \delta_{ij}$ then $\bb{C}\underline{\psi}$ is a Hilbert space, and this extends canonically to a Hilbert space structure on the exterior algebra which is completely described by basis elements $\psi_{i_1} \wedge \cdots \wedge \psi_{i_r}$ and $\psi_{j_1} \wedge \cdots \wedge \psi_{j_s}$ of different lengths $r \neq s$ being orthogonal, and if $r = s$ then $\langle \psi_{i_1} \wedge \cdots \wedge \psi_{i_r}\,, \psi_{j_1} \wedge \cdots \wedge \psi_{j_s} \rangle = \delta_{i_1 = j_1} \cdots \delta_{i_r = j_s}$. 

When we refer to $\bigwedge \bb{C}\underline{\psi}$ as a Hilbert space, it is always this pairing that is intended, so that the wedge products \eqref{eq:exterior_basis} are an orthonormal basis. Since the exterior algebra has a Hilbert space structure, we can talk of adjoints:

\begin{lemma} The operator $\psi_i$ is adjoint to $\psi_i^*$.
\end{lemma}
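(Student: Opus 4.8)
The plan is to verify the adjoint relation $\langle \psi_i\, \alpha, \beta \rangle = \langle \alpha, \psi_i^* \beta \rangle$ directly on the orthonormal basis of ordered wedge products, reducing the whole statement to a sign computation. Since the pairing was defined precisely so that the wedge products $e_I := \psi_{i_1} \wedge \cdots \wedge \psi_{i_r}$, indexed by strictly increasing multi-indices $I = (i_1 < \cdots < i_r)$, form an orthonormal basis, and since an operator on a finite-dimensional Hilbert space has a unique adjoint, it suffices to check $\langle \psi_i \wedge e_I, e_J \rangle = \langle e_I, \psi_i^* e_J \rangle$ for all such $I, J$ and then extend by (conjugate-)linearity.

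First I would determine when each side is nonzero. On the left, $\psi_i \wedge e_I$ vanishes whenever $i \in I$ because $\psi_i \wedge \psi_i = 0$; when $i \notin I$ it equals $\pm e_{I \cup \{i\}}$, which by orthonormality pairs nontrivially with $e_J$ exactly when $J = I \cup \{i\}$ as sets. On the right, the contraction $\psi_i^* e_J = (\psi_i)^* \lrcorner e_J$ vanishes unless $i \in J$, in which case it is a signed copy of $e_{J \setminus \{i\}}$, and this pairs nontrivially with $e_I$ exactly when $I = J \setminus \{i\}$. These two conditions coincide: both sides are nonzero precisely when $i \notin I$ and $J = I \cup \{i\}$.

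It then remains to match the signs in this single surviving case. Writing $k$ for the position of $i$ among the sorted elements of $J$, the contraction formula gives $\psi_i^* e_J = (-1)^{k-1} e_I$, so the right-hand side equals $(-1)^{k-1}$. On the left, reordering $\psi_i \wedge e_I$ into canonical increasing order requires moving $\psi_i$ past exactly the $k-1$ generators of $I$ whose index is smaller than $i$, each transposition contributing a factor $-1$; hence $\psi_i \wedge e_I = (-1)^{k-1} e_J$ and the left-hand side is also $(-1)^{k-1}$. The two agree, which proves the claim.

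The only real obstacle is the sign bookkeeping: one must confirm that the sign $(-1)^{j-1}$ built into the definition of contraction is exactly the sign produced by sorting $\psi_i$ into position $k$, i.e.\ that the index $j$ at which $\psi_i$ is contracted out of $e_J$ equals one plus the number of smaller-indexed generators. Everything else is immediate from orthonormality. One could alternatively try to deduce the relation from the canonical anticommutation relations via the universal property of Proposition \ref{prop:univ_prop_exterior}, but the direct computation is the most transparent route.
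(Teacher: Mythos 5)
Your proof is correct and follows essentially the same route as the paper's: verify the adjoint identity on the orthonormal basis of ordered wedge products, observe that both sides are nonzero exactly when $i \notin I$ and $J = I \cup \{i\}$, and match the signs. In fact your sign bookkeeping is slightly more explicit than the paper's, which only notes that both signs are determined by the parity of the position of $i$ and leaves the agreement as ``easy to see,'' whereas you pin both sides down to $(-1)^{k-1}$.
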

\begin{proof}
	It suffices to prove that
	\[
		\Big\langle \psi_i\Big( \psi_{i_1} \wedge \cdots \wedge \psi_{i_r} \Big) \,, \psi_{j_1} \wedge \cdots \wedge \psi_{j_s} \Big\rangle = \Big\langle \psi_{i_1} \wedge \cdots \wedge \psi_{i_r} \,, \psi_i^*\Big( \psi_{j_1} \wedge \cdots \wedge \psi_{j_s} \Big) \Big\rangle
	\]
	for all appropriate pairs of sequences. The left hand side is nonzero if and only if $i \notin \{ i_1, \ldots, i_r \}$, $r + 1 = s$ and $\{ i \} \cup \{ i_1, \ldots, i_r \} = \{ j_1, \ldots, j_s \}$ in which case it is either $+1$ or $-1$ depending on whether the position of $i$ in the former set, when it is arranged in increasing order, is odd or even. The right hand side is nonzero if and only if $i \in \{ j_1, \ldots, j_r \}$, $r = s - 1$ and $\{ j_1, \ldots, j_s \} \setminus \{ i \} = \{ i, i_1, \ldots, i_r \}$, in which case it is either $+1$ or $-1$ depending on whether the position of $i$ in the set of $j$'s is odd or even. It is easy to see these are the same.
\end{proof}

If we denote by $(-)^\dagger$ the adjoint of an operator, then $( \psi_i \wedge - )^\dagger = (\psi_i)^* \lrcorner (-)$ or somewhat more confusingly $\psi_i^\dagger = \psi_i^*, (\psi_i^*)^\dagger = \psi_i$.

\begin{remark} In physics the exterior algebra is called the space of \emph{Grassmann numbers}, and it arises naturally as follows: if $\call{H}$ is the Hilbert space of a single particle, then the space of states of $k$-particles is $\operatorname{Sym}^k(\call{H})$ for bosons and $\bigwedge^k \call{H}$ for fermions. This follows from the universal property and the hypothesis that wave functions of bosons are invariant to permutation, while those of fermions change sign. As a Hilbert space, $\bigwedge \call{H}$ is sometimes called \emph{fermionic Fock space}, $\psi_i$ a \emph{creation} operator and $\psi_i^*$ an \emph{annihilation} operator.
\end{remark}

\subsection{Mathematics of Quantum Mechanics}

A \emph{qubit} is the Hilbert space $\mathbb{C}^2$ with standard orthonormal basis denoted $\ket{0}, \ket{1}$. Another orthonormal basis is
\begin{equation}\label{eq:plus_minus}
	\ket{+} = \tfrac{1}{\sqrt{2}}( \ket{0} + \ket{1} )\,, \qquad \ket{-} = \tfrac{1}{\sqrt{2}}( \ket{0} - \ket{1} )\,.
\end{equation}
The Pauli sigma matrices are the linear operators (in the standard basis)
\begin{equation}
	X = \begin{pmatrix} 0 & 1 \\ 1 & 0 \end{pmatrix}\,, \qquad Z = \begin{pmatrix} 1 & 0 \\ 0 & -1 \end{pmatrix}\,, \qquad Y = \begin{pmatrix} 0 & -i \\ i & 0 \end{pmatrix} = iXZ\,.
	\end{equation}
Some basic facts that can be read off from the above:
\begin{itemize}
\item $X,Z$ anticommute, that is $XZ = - ZX$.
\item $X \ket{+} = \ket{+}$ and $X \ket{-} = -\ket{-}$. The spectrum of $X$ is $\{-1,+1\}$.
\item $Z \ket{+} = \ket{-}$ and $Z \ket{-} = \ket{+}$. The spectrum of $Z$ is also $\{-1,+1\}$.
\item $X,Y,Z$ are self-adjoint, involutive (square to the identity) and unitary.
\end{itemize}
The Hilbert spaces of quantum computation are tensor products of single qubit Hilbert spaces $\call{H} = (\mathbb{C}^2)^{\otimes n}$. Following physics notation we denote vectors in these tensor products by kets, for example
\begin{align*}
\ket{001} &= \ket{0} \otimes \ket{0} \otimes \ket{1} \in (\mathbb{C}^2)^{\otimes 3}\\
\ket{+-+} &= \ket{+} \otimes \ket{-} \otimes \ket{+} \in (\mathbb{C}^2)^{\otimes 3}\,.
\end{align*}
If $F$ is an operator on $\mathbb{C}^2$ (we sometimes call this a \emph{single qubit operator}) then $1 \otimes \cdots \otimes F \otimes \cdots \otimes 1$ acting on $(\mathbb{C}^2)^{\otimes n}$ with the $F$ in the $i$th tensor factor will be denoted $F_i$. Note that for single-qubit operators $F, F'$ the operators $F_i, F'_j$ on $(\mathbb{C}^2)^{\otimes n}$ commute if $i \neq j$.

\begin{defn} The \emph{Pauli group} $G_n$ on $n$ qubits is the group of invertible linear operators on the tensor product Hilbert space $(\mathbb{C}^2)^{\otimes n}$ generated by $X,Y,Z,-1,i$ operating in each qubit \cite[\S 10.5.1]{nc}. This is a subgroup of the group of unitary operators.
\end{defn}

\begin{example}\label{example:three_qubit} Consider the operators
\[
X_1 X_2 = X \otimes X \otimes 1\,, \qquad X_2 X_3 = 1 \otimes X \otimes X
\]
on $\call{H} = (\mathbb{C}^2)^{\otimes 3}$. These are elements of the Pauli group $G_3$. Note that for $a,b,c \in \{-,+\}$
\begin{align}
X_1 X_2 \ket{abc} &= ab \ket{abc}\,,\label{eq:abc_ket}\\
X_2 X_3 \ket{abc} &= bc \ket{abc}\label{eq:abc_ket2}
\end{align}
where the coefficients are read as $\pm 1$. Note that for $a \in \{0,1\}$ the state
\begin{equation}\label{eq:entangledstate}
\ket{\tilde{a}} = \tfrac{1}{\sqrt{2}}\big( \ket{+++} + (-1)^a \ket{---})
\end{equation}
is a $+1$-eigenvector of $X_1 X_2, X_2 X_3$ and in fact these two states form a basis for the $+1$-eigenspace. This is an example of an \emph{entangled state} (see Remark \ref{remark:entangled_measure} below). The self-adjoint operator $H = -X_1X_2 - X_2X_3$ has spectrum $\{-2,-1,0,1,2\}$ and its $(-2)$-eigenspace (the space of ground states) is equal to the joint $+1$-eigenspace of $X_1X_2, X_2X_3$ and is therefore also spanned by the states in \eqref{eq:entangledstate}.
\end{example}

\begin{remark}\label{remark:entangled_measure}
	Let us explain what it means for the state $\ket{\tilde{a}}$ of \eqref{eq:entangledstate} to be entangled. Consider three observers, each with exclusive access to one of the three qubits $1,2,3$ in  Example \ref{example:three_qubit}. If the first observer measures their qubit in the basis $\ket{+}, \ket{-}$ then with probability $0.5$ they measure $\ket{+}$ and the state is projected onto $\ket{++}$ for the other two observers (both of whom will measure $\ket{+}$ with probability $1$ if they do the corresponding measurement on their qubit) and with probability $0.5$ the $0.5$ the first observer measures $\ket{-}$ and the state is projected onto $\ket{--}$ (the remaining two observers both measuring $\ket{-}$ with probability $1$). In this operational sense, the ``individual'' states of the three qubits cannot be distinguished by measurements when the joint state is entangled.
	\end{remark}
	
\subsection{Quantum Error-Correction}

Physical information processing systems work by propagating messages, and these must be protected against the effects of noise. This is done by adding redundant information to the message, or \emph{encoding} it, in such a way that even if some of the information in the message is corrupted by noise, the original message can be reconstructed. The specification of \emph{how} to encode and reconstruct messages is the data of an \emph{error-correcting code}. In the case that the messages are quantum states, these are \emph{quantum error-correcting codes} \cite{nc}.

Our recommended references for quantum error-correction are \cite{nc}. Here we give a very brief review. For succinctness in this paper we appreviate {quantum error-correcting code} to \emph{quantum code}.

\begin{defn}\label{defn:stabiliser_code} Let $\call{H}$ be a finite-dimensional Hilbert space. A \emph{stabiliser quantum code} on $\call{H}$ is a commutative subgroup $S \subseteq U(\call{H})$ of the group of unitary operators on $\call{H}$ with $-1 \notin S$ such that there exists an integer $n > 1$ and isomorphism of Hilbert spaces
\[
\Phi: \call{H} \lto (\mathbb{C}^2)^{\otimes n}
\]
where for every $g \in S$ the operator $\Phi g \Phi^{-1}$ belongs to the Pauli group $G_n$. We refer to $S$ as the \emph{stabiliser group} of the code and invariant vectors as \emph{codewords}.
\end{defn}


\begin{remark}\label{remark:stab_prop} We refer to \cite[\S 10.5.1]{nc} for the basic theory of stabiliser codes. In particular, observe that since every pair of operators in the Pauli group either commutes or anticommutes, the conditions that $S$ is commutative and $-1 \notin S$ are necessary for there to be nonzero codewords. It is also easy to see that if $S$ is a stabiliser code then $g^2 = 1$ for every $g \in S$ and every operator in $S$ is self-adjoint.
\end{remark}

\begin{example} The stabiliser group $S = \langle X_1 X_2, X_2 X_3 \rangle$ determines the three qubit phase flip code \cite[\S 10.5.6]{nc}.
\end{example}

\begin{remark}
In the analogy with the transmission of messages, we have a space $\call{H}$ of possible messages and a subspace $\call{C}$ of codewords: encoded messages are always codewords. That means that if we receive a message $h \in \call{H} \setminus \call{C}$ we know an error has occurred; detecting this fact and replacing our message by the closest codeword $c = P(h)$ is the process of error-correction. From the point of view of geometry of interaction it is interesting that, in cases where the error can be unambiguously corrected, the vectors $h, c$ have the same \emph{denotation} (the intended message) but different \emph{senses}. 
\end{remark}


\subsection{Exterior algebra and qubits}\label{section:ext_qubit}

Recall from \eqref{eq:qubit} that the exterior algebra $\bigwedge( \bb{C} \psi )$ can be identified with the single qubit Hilbert space, taking $\ket{0} = 1, \ket{1} = \psi$. Under this identification the Pauli operators $X,Z$ correspond respectively to $\psi + \psi^*, \psi^* \psi - \psi \psi^*$. More generally, with $\bb{C}\underline{\psi} = \mathbb{C} \psi_1 \oplus \cdots \oplus \mathbb{C} \psi_n$ we have an isomorphism of Hilbert spaces
\begin{gather}
(\mathbb{C}^2)^{\otimes n} \cong \bigwedge \mathbb{C} \underline{\psi} \label{eq:iso_sneak}\\
\ket{a_1 \,\cdots\, a_n} \longmapsto \psi_1^{a_1} \wedge \cdots \wedge \psi_n^{a_n} \label{eq:iso_sneak_basis}
\end{gather}
where $a_1,\ldots,a_n \in \{0,1\}$ and $\psi^0 = 1, \psi^1 = \psi$. The left hand side has its usual Hilbert space structure, while the right hand side has the structure defined in Section \ref{section:exterior_alg}. There is a simple relationship between the operators $\{ \psi_i, \psi_i^* \}_{i=1}^n$ on the right hand side and the operators $\{ X_i, Z_i \}_{i=1}^n$ on the left hand side, once we account for the signs. Note that
\begin{align*}
X_i \ket{a_1 \,\cdots\, a_n} &= \ket{a_1 \,\cdots\, \Bar{a_i} \,\cdots \, a_n}\,\\
Z_i \ket{a_1 \,\cdots\, a_n} &= (-1)^{a_i} \ket{a_1 \,\cdots\, a_n}\,.
\end{align*}
For mathematicians the next lemma says that the $Z$ operators behave like the ``Koszul signs'' implicit in applying $\psi_i,\psi_i^*$ to the $\mathbb{Z}_2$-graded vector space $\bigwedge \mathbb{C} \underline{\psi}$ while physicists are familiar with the same identities under the name of the Jordan-Wigner transformation.

\begin{lemma} As operators on \eqref{eq:iso_sneak} we have for $1 \le i \le n$
\begin{gather}
\psi_i + \psi_i^* = Z_1 \,\cdots\, Z_{i-1} X_i \label{eq:iso_sneak_1}\\
\psi_i^* \psi_i - \psi_i \psi_i^* = Z_i \label{eq:iso_sneak_2}\\
\psi_i - \psi_i^* = - Z_1 \,\cdots\, Z_{i} X_i \label{eq:iso_sneak_3}
\end{gather}
\end{lemma}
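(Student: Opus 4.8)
The plan is to verify each identity by evaluating both sides on an arbitrary basis vector $\psi_1^{a_1} \wedge \cdots \wedge \psi_n^{a_n} = \ket{a_1 \cdots a_n}$, with each $a_j \in \{0,1\}$; since these vectors form an orthonormal basis under the isomorphism \eqref{eq:iso_sneak}, agreement on all of them suffices. First I would record the action of the creation operator $\psi_i = \psi_i \wedge (-)$ and the annihilation operator $\psi_i^* = (\psi_i)^* \lrcorner (-)$. The creation operator kills the vector when $a_i = 1$ (because $\psi_i \wedge \psi_i = 0$) and otherwise must be commuted past the $a_1 + \cdots + a_{i-1}$ generators preceding slot $i$, producing the Koszul sign $(-1)^{a_1 + \cdots + a_{i-1}}$ together with the basis vector having $a_i$ changed to $1$. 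Dually $\psi_i^*$ vanishes when $a_i = 0$, and otherwise the contraction formula produces the same sign $(-1)^{a_1 + \cdots + a_{i-1}}$, since $\psi_i$ then occupies position $a_1 + \cdots + a_{i-1} + 1$, together with the vector having $a_i$ changed to $0$.

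For \eqref{eq:iso_sneak_1}, exactly one of $\psi_i, \psi_i^*$ acts nontrivially on the given basis vector, and in either case it flips $a_i$ and contributes the factor $(-1)^{a_1 + \cdots + a_{i-1}}$. The right-hand side does precisely this: $X_i$ flips $a_i$ while $Z_1 \cdots Z_{i-1}$ contributes $(-1)^{a_1 + \cdots + a_{i-1}}$, reading the bits $a_1, \ldots, a_{i-1}$ that $X_i$ leaves untouched. For \eqref{eq:iso_sneak_2} I would observe that $\psi_i^* \psi_i$ projects onto the subspace $a_i = 0$ and $\psi_i \psi_i^*$ onto $a_i = 1$ --- here the intermediate Koszul signs cancel because creation and annihilation happen at the same site --- so the difference acts by $(-1)^{a_i}$, which is exactly the eigenvalue of $Z_i$.

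Rather than repeat a sign computation for \eqref{eq:iso_sneak_3}, I would derive it algebraically from the two identities already established. Using the canonical anticommutation relations $\psi_i^2 = (\psi_i^*)^2 = 0$ and $\psi_i\psi_i^* + \psi_i^*\psi_i = 1$ together with \eqref{eq:iso_sneak_2}, a short manipulation gives $Z_i \psi_i = -\psi_i$ and $Z_i \psi_i^* = \psi_i^*$. Multiplying \eqref{eq:iso_sneak_1} on the left by $Z_i$, and using that $Z_i$ commutes with $Z_1, \ldots, Z_{i-1}$, then yields
\[
Z_1 \cdots Z_i X_i = Z_i(\psi_i + \psi_i^*) = -\psi_i + \psi_i^* = -(\psi_i - \psi_i^*),
\]
which is exactly \eqref{eq:iso_sneak_3}.

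The main obstacle is bookkeeping the Koszul signs correctly, and in particular confirming that the sign incurred by moving a single generator past the preceding wedge factors matches the eigenvalue contribution $(-1)^{a_1 + \cdots + a_{i-1}}$ of the string $Z_1 \cdots Z_{i-1}$. The genuinely error-prone point is the interaction of $X_i$ and $Z_i$ in \eqref{eq:iso_sneak_3}: a direct verification would have $Z_i$ read the already-flipped bit, contributing an extra sign that must be tracked carefully, whereas the algebraic route above bypasses this second sign computation entirely.
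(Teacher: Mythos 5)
Your proof is correct and follows essentially the same route as the paper, which simply asserts that the identities can be checked on the basis \eqref{eq:iso_sneak_basis}; you carry out that basis verification explicitly (with the Koszul signs correctly matching the $Z$-string eigenvalues) and add a clean algebraic shortcut deriving \eqref{eq:iso_sneak_3} from \eqref{eq:iso_sneak_1} and \eqref{eq:iso_sneak_2}.
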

\begin{proof}
The identities can be checked on the basis \eqref{eq:iso_sneak_basis}.
\end{proof}

The algebra $\bigwedge \mathbb{C} \underline{\psi}$ is $\mathbb{Z}_2$-graded, and the \emph{grading operator} $G$ is the linear operator defined on a homogeneous element $\Psi$ by $G\Psi = (-1)^a \Psi$ where $a \in \{0,1\}$ is the $\mathbb{Z}_2$-degree.

\begin{lemma}\label{lemma:grading_operator} As operators on \eqref{eq:iso_sneak} we have $G = Z_1 \,\cdots\, Z_n$.
\end{lemma}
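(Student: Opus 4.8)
The plan is to verify the operator identity directly on the orthonormal basis \eqref{eq:iso_sneak_basis}, exactly as suggested by the proof of the previous lemma. Since both $G$ and $Z_1 \cdots Z_n$ are linear, it suffices to check that they agree on each basis vector $\ket{a_1 \cdots a_n}$ with $a_1, \ldots, a_n \in \{0,1\}$, which corresponds under \eqref{eq:iso_sneak} to the wedge product $\psi_1^{a_1} \wedge \cdots \wedge \psi_n^{a_n}$.

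First I would compute the left-hand side. The wedge product $\psi_1^{a_1} \wedge \cdots \wedge \psi_n^{a_n}$ is homogeneous of $\mathbb{Z}$-degree $a_1 + \cdots + a_n$, since each factor $\psi_i^{a_i}$ contributes degree $a_i$ (that is, degree $1$ when $a_i = 1$ and degree $0$ when $a_i = 0$). Its $\mathbb{Z}_2$-degree is therefore the parity of $a_1 + \cdots + a_n$, so by the definition of the grading operator
\begin{equation}
G \ket{a_1 \cdots a_n} = (-1)^{a_1 + \cdots + a_n} \ket{a_1 \cdots a_n}\,.
\end{equation}
Next I would compute the right-hand side using the displayed action $Z_i \ket{a_1 \cdots a_n} = (-1)^{a_i} \ket{a_1 \cdots a_n}$ recorded just before the lemma. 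Because the single-qubit operators $Z_i$ and $Z_j$ commute for $i \neq j$, the product $Z_1 \cdots Z_n$ is unambiguous and acts as the scalar $(-1)^{a_1} \cdots (-1)^{a_n} = (-1)^{a_1 + \cdots + a_n}$ on $\ket{a_1 \cdots a_n}$. Comparing with the previous display shows the two operators agree on every basis vector.

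There is no real obstacle here; the only point requiring a moment's care is the identification of the $\mathbb{Z}_2$-degree of a basis wedge with the parity of the number of its factors, equivalently with $\sum_i a_i \bmod 2$, and the observation that no Koszul sign intervenes because the $Z_i$ are mutually commuting diagonal operators. Once these are noted, agreement on the orthonormal basis \eqref{eq:iso_sneak_basis} forces the operator equality $G = Z_1 \cdots Z_n$.
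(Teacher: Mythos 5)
Your proof is correct and amounts to the same sign count as the paper's: the paper factors $G$ as the product of the per-qubit grading operators $\psi_i^*\psi_i - \psi_i\psi_i^*$ and invokes the identity \eqref{eq:iso_sneak_2} ($=Z_i$), whereas you verify the resulting equality $G = Z_1 \cdots Z_n$ directly on the basis \eqref{eq:iso_sneak_basis}. Both reduce to the observation that each operator multiplies $\ket{a_1 \cdots a_n}$ by $(-1)^{a_1 + \cdots + a_n}$, so no further comment is needed.
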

\begin{proof}
The operator $\psi_i^* \psi_i - \psi_i \psi_i^*$ is clearly the grading operator on $\bigwedge \mathbb{C} \psi_i$ and since it is even, it commutes with $\psi_j^* \psi_j - \psi_j \psi_j^*$ for any $j \neq i$. Hence the claim follows from \eqref{eq:iso_sneak_2}.
\end{proof}

\begin{lemma}\label{lemma:bell_states_homogeneous} For $a \in \{0,1\}$ the vectors
	\[
		\ket{\widetilde{a}} = \tfrac{1}{\sqrt{2}}\big( \ket{+ \,\cdots\, +} + (-1)^a \ket{-\, \cdots \,-})
	\]
	of \eqref{eq:iso_sneak} are mutually orthogonal, unit length, and homogeneous of degree $a$.
\end{lemma}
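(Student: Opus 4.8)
The plan is to treat the two assertions separately: normalization and orthogonality reduce to an elementary inner-product computation, whereas homogeneity is where the earlier structural lemmas do the work.

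First, for orthonormality I would abbreviate $u = \ket{+ \,\cdots\, +}$ and $v = \ket{-\, \cdots \,-}$. Since $\ket{+}, \ket{-}$ form an orthonormal basis of a single qubit by \eqref{eq:plus_minus}, the tensor powers satisfy $\langle u, u \rangle = \langle v, v \rangle = 1$ and $\langle u, v \rangle = 0$. Writing $\ket{\widetilde{0}} = \tfrac{1}{\sqrt{2}}(u + v)$ and $\ket{\widetilde{1}} = \tfrac{1}{\sqrt{2}}(u - v)$ and expanding by (conjugate-)bilinearity — all coefficients being real here — immediately gives $\langle \widetilde{a}, \widetilde{a} \rangle = 1$ for $a \in \{0,1\}$ and $\langle \widetilde{0}, \widetilde{1} \rangle = \tfrac{1}{2}\big(\langle u, u \rangle - \langle v, v \rangle\big) = 0$, the cross terms cancelling. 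This settles unit length and (mutual) orthogonality.

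For homogeneity the key input is Lemma \ref{lemma:grading_operator}, which identifies the grading operator as $G = Z_1 \,\cdots\, Z_n$. A vector is homogeneous of $\mathbb{Z}_2$-degree $a$ exactly when it is a $(-1)^a$-eigenvector of $G$, so it suffices to verify $G \ket{\widetilde{a}} = (-1)^a \ket{\widetilde{a}}$. Using the single-qubit facts $Z\ket{+} = \ket{-}$ and $Z\ket{-} = \ket{+}$ recorded among the basic properties of the Pauli matrices, each $Z_i$ swaps the $+$ and $-$ in its factor, so $G u = v$ and $G v = u$. Hence $G \ket{\widetilde{a}} = \tfrac{1}{\sqrt{2}}\big(v + (-1)^a u\big)$, and factoring out $(-1)^a$ (using $(-1)^{2a} = 1$) rewrites this as $(-1)^a \, \tfrac{1}{\sqrt{2}}\big(u + (-1)^a v\big) = (-1)^a \ket{\widetilde{a}}$, as desired.

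The argument is routine, and there is no genuine obstacle beyond keeping the sign bookkeeping in the homogeneity step correct. As an alternative to invoking Lemma \ref{lemma:grading_operator}, one could expand $u$ and $v$ in the computational basis via \eqref{eq:plus_minus}, observe that the coefficient of $\ket{b_1 \,\cdots\, b_n}$ in $\ket{\widetilde{a}}$ is proportional to $1 + (-1)^{a + \sum_i b_i}$, which vanishes unless $\sum_i b_i \equiv a \pmod 2$, and then note via \eqref{eq:iso_sneak_basis} that each surviving basis vector $\psi_1^{b_1} \wedge \cdots \wedge \psi_n^{b_n}$ has $\mathbb{Z}_2$-degree $a$. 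The grading-operator route is cleaner and avoids the binomial expansion, so I would present that as the main proof.
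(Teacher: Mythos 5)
Your proposal is correct and follows essentially the same route as the paper: the orthonormality is the same direct inner-product expansion (which the paper simply declares easy to check), and the homogeneity step is verbatim the paper's argument, applying Lemma \ref{lemma:grading_operator} together with $Z\ket{\pm}=\ket{\mp}$ to get $G\ket{\widetilde{a}}=(-1)^a\ket{\widetilde{a}}$. Nothing further is needed.
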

\begin{proof}
It is easy to check that $\langle \widetilde{b} \,|\, \widetilde{a} \rangle = \delta_{ab}$. Note that by Lemma \ref{lemma:grading_operator}
\begin{align*}
G \ket{\widetilde{a}} &= \tfrac{1}{\sqrt{2}}\big( G\ket{+ \,\cdots\, +} + (-1)^a G\ket{-\, \cdots \,-})\\
&= \tfrac{1}{\sqrt{2}}\big( Z_1 \,\cdots\, Z_n \ket{+ \,\cdots\, +} + (-1)^a Z_1 \,\cdots\, Z_n \ket{-\, \cdots \,-})\\
&= \tfrac{1}{\sqrt{2}}\big( Z_1 \,\cdots\, Z_n \ket{+ \,\cdots\, +} + (-1)^a Z_1 \,\cdots\, Z_n \ket{-\, \cdots \,-})\\
&= \tfrac{1}{\sqrt{2}}\big( \ket{- \,\cdots\, -} + (-1)^a \ket{+\, \cdots \,+})\\
&= (-1)^a \ket{\widetilde{a}}
\end{align*}
which shows that $\ket{\widetilde{a}}$ is homogeneous of degree $a$.
\end{proof}

\begin{remark} When $n = 2$ the vectors $\ket{\widetilde{a}}$ are called \emph{Bell states} and they are the most fundamental example of entangled states in a tensor product of Hilbert spaces.
\end{remark}

\section{The Code of a Proof Net}\label{sec:Hilbert_Space_model}

\subsection{The Hilbert Space}\label{section:hilbert_space}

In this section $\pi$ is a proof structure and $\call{L}$ the set of links in $\pi$. We will define a Hilbert space $\call{H}_\pi$ by associating a set of fermions with each link.

\begin{defn}[\textbf{Unoriented atoms of a link}]\label{def:unoriented_atoms}
	To each link $l$ in $\pi$ we associate a set of unoriented atoms $U_l$. If $l$ is a Conclusion link then $U_l = \varnothing$. For an Axiom/Cut link
	\begin{center}
		\begin{tabular}{c c}
			$
			\begin{tikzcd}[column sep = small, row sep = small]
				& \ax\arrow[dl,bend right, dash]\arrow[dr,bend left, dash]\\
				\neg A\arrow[d] & & A\arrow[d]\\
				\vdots & & \vdots
			\end{tikzcd}$
			&
			$
			\begin{tikzcd}[column sep = small, row sep = small]
				\vdots\arrow[d, dash] & & \vdots\arrow[d, dash]\\
				\neg A\arrow[dr,bend right] & & A\arrow[dl, bend left]\\
				& \cut
			\end{tikzcd}$
		\end{tabular}
	\end{center}
involving formulas $A, \neg A$ with the same set of unoriented atoms $\lbrace X_1,...,X_n\rbrace$ we define
\begin{equation*}
	U_l = \lbrace X_1,...,X_n\rbrace\,.
	\end{equation*}
	For $\otimes/\parr$ links involving formulas $A,B$
	\begin{center}
		\begin{tabular}{ c c }
			$\begin{tikzcd}[column sep = small, row sep = small]
				\vdots && \vdots \\
				A && B \\
				& \otimes \\
				& {A \otimes B} \\
				& \vdots
				\arrow[from=4-2, to=5-2]
				\arrow[curve={height=-12pt}, from=2-3, to=3-2]
				\arrow[curve={height=12pt}, from=2-1, to=3-2]
				\arrow[no head, from=3-2, to=4-2]
				\arrow[no head, from=1-3, to=2-3]
				\arrow[no head, from=1-1, to=2-1]
			\end{tikzcd}$
			&
			$\begin{tikzcd}[column sep = small, row sep = small]
				\vdots && \vdots \\
				A && B \\
				& \parr \\
				& {A \parr B} \\
				& \vdots
				\arrow[from=4-2, to=5-2]
				\arrow[curve={height=-12pt}, from=2-3, to=3-2]
				\arrow[curve={height=12pt}, from=2-1, to=3-2]
				\arrow[no head, from=3-2, to=4-2]
				\arrow[no head, from=1-3, to=2-3]
				\arrow[no head, from=1-1, to=2-1]
			\end{tikzcd}$
		\end{tabular}
	\end{center}
where $A$ has unoriented atoms $\lbrace X_1,...,X_n\rbrace$ and $B$ has unoriented atoms $\lbrace Y_1,...,Y_m\rbrace$,
\begin{equation*}
	U_l = \lbrace X_1,...,X_n, Y_1,...,Y_m\rbrace\,.
	\end{equation*}
	\end{defn}

Informally, if $X$ is an unoriented atom in a formula $A$ on an edge incident at a non-conclusion link $l$, then there is ``another copy'' of $X$ somewhere on another edge incident at $l$ and the unoriented atom $X \in U_l$ stands for this pair.

\begin{defn} The \emph{set of qubits} $Q_\pi$ of $\pi$ is the disjoint union
	\begin{equation}
		Q_\pi = \coprod_{l \in \call{L}}U_l\,.
	\end{equation}
	A \emph{qubit ordering} for $\pi$ is a list $U_1,\ldots,U_n$ where $Q_\pi = \{ U_1, \ldots, U_n \}$.
	\end{defn} 

In \cite[Definition 3.16]{algpnt} we defined a set $U_\pi$ of unoriented atoms of $\pi$ by taking a disjoint union over \emph{edges}, of the set of unoriented atoms of the formula $A$ labelling each edge. In this paper our unoriented atoms are associated to \emph{links}, and to avoid confusion we tend to refer to them as the \emph{qubits} of $\pi$ rather than the ``unoriented atoms of $\pi$''. This duality of perspectives will be fully explored elsewhere but the next lemma makes the basic point.

Recall that there is an equivalence relation $\approx$ on $U_\pi$ generated by a relation $\sim$ \cite[Definition 4.8]{algpnt}. Two unoriented atoms $V, V'$ satisfy $V \sim V'$ if they occur in formulas on edges incident at a common non-conclusion link $l$ \cite[Definition 3.18, 3.19]{algpnt}.

\begin{lemma}\label{lemma:duality_links_edges} There is a bijection between the set of qubits $Q_\pi$ and the set of unordered pairs $V, V' \in U_\pi$ of unoriented atoms of $\pi$ with $V \sim V'$.
\end{lemma}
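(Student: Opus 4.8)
The plan is to read off the bijection directly from the definition of the qubit set $Q_\pi = \coprod_{l \in \call{L}} U_l$, using the informal observation recorded just before the lemma: an element of $U_l$ names an unoriented atom occurring in the formulas on exactly two of the edges incident at the non-conclusion link $l$. Concretely, I would define a map $\Phi$ from $Q_\pi$ to the set of $\sim$-related unordered pairs as follows. A qubit is a pair $(l,X)$ with $l$ a non-conclusion link and $X \in U_l$; the definition of $U_l$ singles out two edges incident at $l$ whose formulas contain (a copy of) $X$, and writing $V, V' \in U_\pi$ for the two resulting occurrences I set $\Phi(l,X) = \{V,V'\}$. Since $V$ and $V'$ occur in formulas on edges incident at the common non-conclusion link $l$, we have $V \sim V'$, so $\Phi$ lands in the target set.

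First I would check that $\Phi$ is well defined by running through the four types of non-conclusion link. For an axiom or cut link the two incident formulas are $A$ and $\neg A$, which by Definition \ref{def:formulas} have the same set of unoriented atoms; hence each $X \in U_l = \{X_1,\dots,X_n\}$ occurs in the formula on each of the two incident edges, pinning down the pair $\{V,V'\}$. For a tensor or par link with premises $A,B$ and conclusion $A \otimes B$ (resp.\ $A \parr B$), the conclusion formula has unoriented atoms $\{X_1,\dots,X_n\} \sqcup \{Y_1,\dots,Y_m\}$, so each element of $U_l$ occurs once in the conclusion formula and once in exactly one of the two premise formulas; this again determines two incident edges and a pair $\{V,V'\}$. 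Conclusion links have $U_l = \varnothing$ and contribute nothing, matching the fact that $\sim$ is witnessed only at non-conclusion links.

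Next I would construct the inverse, which simultaneously yields surjectivity and injectivity. Given a $\sim$-related pair $\{V,V'\}$, with $V,V'$ occurring on edges $e,e'$ incident at a common non-conclusion link $l$, the two occurrences name the same unoriented atom $X$ (the atom that $l$ identifies across $e$ and $e'$), and this $X$ lies in $U_l$; I would send $\{V,V'\} \mapsto (l,X)$ and verify, by re-reading the same four cases, that this is a two-sided inverse to $\Phi$.

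The step I expect to be the main obstacle is injectivity, i.e.\ showing that the link $l$ and atom $X$ recovered from a pair $\{V,V'\}$ are uniquely determined by the pair. The potential failure is a parallel-edge configuration in which two distinct links are incident to the same two edges and could a priori identify the same pair of occurrences; the relevant instance is an axiom link whose two conclusions are exactly the two premises of a single cut link. I would argue that outside such configurations uniqueness is automatic, since an edge meets exactly two links, so two edges sharing two common incident links must be parallel; the remaining degenerate case is then controlled by the precise form of the generating relation $\sim$ of \cite[Def.\ 3.18, 3.19]{algpnt}, which records the witnessing link and thereby distinguishes the two qubits. Assembling the forward map, its inverse, and this uniqueness check gives the claimed bijection.
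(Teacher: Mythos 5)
Your proposal is correct and takes essentially the same approach as the paper, whose proof simply defines the inverse map (from $\sim$-related pairs to qubits, by inspection of the cited definitions of $\sim$) and declares it bijective by construction. Your extra attention to the parallel-edge configuration, where an axiom's two conclusions are the two premises of a single cut, is a legitimate subtlety that the paper's one-line argument leaves implicit and resolves the same way you do, via the witnessing link recorded in the generating relation.
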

\begin{proof}
Given a pair $V, V' \in U_\pi$ with $V \sim V'$ by inspection of \cite[Definition 3.18, 3.19]{algpnt} there is a non-conclusion link $l$ with $V, V'$ determining the same unoriented atom in $U_l$. This defines a map from pairs $V \sim V'$ to qubits which is bijective by construction.
\end{proof}

\begin{defn}\label{def:objects}
	 The \emph{Hilbert space $\call{H}_l$ of a link $l \in \call{L}$} is
	\begin{equation}
		\call{H}_{l} = \bigwedge \bigoplus_{X \in U_l}\bb{C} \psi_X^l
		\end{equation}
	where $\psi^l_X$ is a formal generator corresponding to $X \in U_l$. The \emph{Hilbert space of $\pi$} is
	\begin{equation}\label{eq:defn_hpi}
		\call{H}_\pi = \bigwedge \bigoplus_{l \in \call{L}}\bigoplus_{X \in U_l}\bb{C}\psi_X^l\,.
		\end{equation}
	\end{defn}

We refer to the $\psi_X^l$ as \emph{link fermions}. Sometimes we denote $\psi^l_X$ by $\psi_X$, keeping in mind each generator is associated with a unique link. If we choose an ordering on the links then we get an isomorphism $\call{H}_\pi \cong \bigotimes_{l} \call{H}_l$. Our reason for caring about qubit orderings is that if we want to represent $\call{H}$ as a qubit Hilbert space, we need to choose an ordering:

\begin{defn}\label{defn:iso_gamma} Let $U_1,\ldots,U_n$ be a qubit ordering for $\pi$. The associated isomorphism of Hilbert spaces $\Gamma: (\mathbb{C}^2)^{\otimes n} \lto \call{H}_\pi$ defined by \eqref{eq:bigwedge_is_exp} and \eqref{eq:iso_sneak} is
\begin{equation}
\Gamma \ket{a_1 \,\cdots\, a_n} = \psi_{U_1}^{a_1} \wedge \cdots \wedge \psi_{U_n}^{a_n}\,.
\end{equation}
\end{defn}

\begin{remark}
	Suppose $A = U \otimes U$ with $U$ atomic, then with $U_i = U$ for $1 \le i \le 4$
	\begin{equation*}
	\neg A = \neg( U_1 \otimes U_2 ) = \neg U_2 \parr \neg U_1
	\end{equation*}
	and \eqref{eq:intro_deduction_a} becomes
	\begin{center}
		\begin{tabular}{>{\centering}m{6cm} >{\centering}m{6cm} >{\centering}m{0.5cm}}
			$
			\begin{tikzcd}[column sep = small, row sep = small]
				& \ax\arrow[dl,bend right, dash]\arrow[dr,bend left, dash]\\
				\neg U_3 \parr \neg U_4 & & U_1 \otimes U_2
			\end{tikzcd}$
			&
			$
				\begin{tikzcd}[column sep = small, row sep = small]
					\neg U_3 \parr \neg U_4 \arrow[dr,bend right] & & U_1 \otimes U_2 \arrow[dl, bend left]\\
					& \cut
				\end{tikzcd}$
			&
			\tagarray{\label{eq:intro_deduction_a2}}
		\end{tabular}
	\end{center}
	The content of these links is $U_1 = U_4, U_2 = U_3$ (see \cite{algpnt}) and we represent these equations by link fermions $\psi_1, \psi_2$ respectively. The correspondence between fermion and equations can be represented informally as $\partial(\psi_1) = U_1 - U_4$, $\partial(\psi_2) = U_2 - U_3$. This relationship between fermions and equations can be made formal using Koszul matrix factorisations.
\end{remark}

\subsection{The Code}\label{section:the_code}

Let $\pi$ be a proof structure with Hilbert space $\call{H}_\pi$. The structure of $\pi$ lies in the fact that there is redundancy in the set of qubits: some unoriented atoms are represented \emph{twice} in the set of qubits $Q_\pi$. To be more precise, let $(U, y_U)$ be an oriented atom appearing in a formula $A$ on an edge in $\pi$ connecting two links $l \lto l'$ which are not conclusions, as in
\begin{equation}\label{eq:A_label_edge}
\xymatrix@C+2pc{
\cdots \quad l \ar[r]^-{A} & l' \quad \cdots
}
\end{equation}
The unoriented atom $U$ appears in both $U_l$ and $U_{l'}$ and there are consequently two qubits $\psi^l_U, \psi^{l'}_U$ in $\call{H}_\pi$ that are associated to $U$. In this section we introduce a self-adjoint operator $\Theta_U$ which represents the statement ``$U = U$'' for these two copies, and derive from these operators an error-correcting code. We write $\psi_U$ for $\psi^l_U$ and $\psi'_U$ for $\psi^{l'}_U$. Associated to these generators are operators on $\call{H}_\pi$ (see Section \ref{section:exterior_alg})
\[
	\psi_U = \psi_U \wedge (-)\,, \quad \psi_U^* = \psi_U^* \lrcorner (-)
\]
and similarly for $\psi'_U$.

\begin{defn}\label{def:edge_operators} The \emph{edge operator} on $\call{H}_\pi$ associated to $(U, y_U)$ is
	\begin{equation}
		\Theta^{l \lto l'}_{(U,y_U)} = y_U(\psi_U' - y_U \psi_U'^{\ast})(\psi_U + y_U \psi_U^\ast)\,.
	\end{equation}
\end{defn}

While the edge operator depends on the \emph{pair} consisting of an \emph{oriented} atom and the edge in $\pi$ on which it appears, to simplify the notation we often write $\Theta^{l \lto l'}_{U}$ or even just $\Theta_U$ where it will not cause confusion.

\begin{lemma} $\Theta_U$ is a self-adjoint operator on $\call{H}_\pi$.
\end{lemma}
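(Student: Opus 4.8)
The plan is to exploit the fact, recorded just above in the excerpt, that the adjoint is anti-multiplicative and satisfies $\psi_U^\dagger = \psi_U^*$, $(\psi_U^*)^\dagger = \psi_U$ (and likewise for the primed operators), together with the observation that $y_U = \pm 1$ is real, so that $\overline{y_U} = y_U$ and $y_U^2 = 1$. First I would name the two factors
\[
A = \psi_U' - y_U \psi_U'^{*}\,, \qquad B = \psi_U + y_U \psi_U^{*}\,,
\]
so that $\Theta_U = y_U A B$, and compute $\Theta_U^\dagger = \overline{y_U}\, B^\dagger A^\dagger = y_U\, B^\dagger A^\dagger$.

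The computation then splits into two elementary sub-claims. The first is that each factor is self-adjoint up to an explicit sign: using $y_U^2 = 1$ one checks directly that $B^\dagger = \psi_U^* + y_U \psi_U = y_U(\psi_U + y_U \psi_U^*) = y_U B$, and similarly $A^\dagger = \psi_U'^{*} - y_U \psi_U' = -y_U(\psi_U' - y_U \psi_U'^{*}) = -y_U A$. Substituting gives $\Theta_U^\dagger = y_U (y_U B)(-y_U A) = -y_U^3\, BA = -y_U\, BA$. The second sub-claim is that $A$ and $B$ anticommute, $AB = -BA$. This is where the structure of the problem enters: since $l \lto l'$ is an edge between two \emph{distinct} non-conclusion links, the generators $\psi_U$ (attached to $l$) and $\psi_U'$ (attached to $l'$) are distinct qubits, so the canonical anticommutation relations make all four cross-anticommutators $\{\psi_U',\psi_U\}$, $\{\psi_U',\psi_U^*\}$, $\{\psi_U'^{*},\psi_U\}$, $\{\psi_U'^{*},\psi_U^*\}$ vanish (the $\delta_{ij}$ term is absent because the indices differ). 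As $A$ and $B$ are each linear combinations of such odd operators from disjoint sites, $A$ and $B$ anticommute.

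Combining the two sub-claims yields $\Theta_U^\dagger = -y_U\, BA = -y_U(-AB) = y_U AB = \Theta_U$, as required. I expect the only genuine content to be the anticommutation step, i.e.\ remembering that the two copies $\psi_U, \psi_U'$ of the atom $U$ sit at different links and therefore anticommute rather than producing a $\delta$-term; the sign bookkeeping in $A^\dagger = -y_U A$ and $B^\dagger = y_U B$ is routine once $y_U^2 = 1$ is used. Everything reduces to the relations already displayed, so no further machinery is needed.
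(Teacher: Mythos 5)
Your proof is correct and follows essentially the same route as the paper's: take the adjoint (which reverses the order of the factors), use $\psi_U^\dagger = \psi_U^*$ and $y_U^2 = 1$, and then use the fact that the operators attached to the two distinct links $l \neq l'$ anticommute to restore the original order at the cost of a sign. Your packaging via $A^\dagger = -y_U A$, $B^\dagger = y_U B$ and $AB = -BA$ is just a cleaner bookkeeping of the identical computation.
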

\begin{proof}
Note that $\psi_U, \psi_U^*$ operate on a different tensor component to $\psi'_U, \psi_U'^*$ and so these two sets of operators anti-commute. Letting $(-)^\dagger$ the operator of taking adjoints,
\begin{align*}
	\Big[ y_U(\psi_U' - y_U \psi_U'^{\ast})(\psi_U + y_U \psi_U^\ast) \Big]^\dagger &= y_U (\psi_U + y_U \psi_U^\ast)^\dagger (\psi_U' - y_U \psi_U'^{\ast})^\dagger\\
	&= y_U (\psi_U^\dagger + y_U (\psi_U^\ast)^\dagger) ((\psi_U')^\dagger - y_U (\psi_U'^{\ast})^\dagger)\\
	&= y_U (\psi_U^* + y_U \psi_U) (\psi_U'^* - y_U \psi_U')\\
	&= - y_U  (\psi_U'^* - y_U \psi_U')(\psi_U^* + y_U \psi_U)\\
	&= ( -y_U \psi_U'^* + \psi_U')(\psi_U^* + y_U \psi_U)\\
	&= y_U ( -y_U \psi_U'^* + \psi_U')(y_U \psi_U^* + \psi_U)
\end{align*}
as claimed.
\end{proof}

Recall the isomorphism of Hilbert spaces $\Gamma$ from Definition \ref{defn:iso_gamma}. 

\begin{proposition}\label{prop:explicit_behaviour}
	Let $\pi$ be a proof structure with qubit ordering $U_1, \ldots, U_n$. As above let a particular oriented atom $(U, y_U)$ be chosen, let $U_i$ be the copy of $U$ in $U_l$ and $U_j$ the copy in $U_{l'}$. Then there is a commutative diagram
	\begin{equation}
	\begin{tikzcd}
		(\mathbb{C}^2)^{\otimes n}\arrow[r,"{\Gamma}"]\arrow[d,swap,"{F}"] & \call{H}_\pi\arrow[d,"{\Theta_U}"]\\
		(\mathbb{C}^2)^{\otimes n}\arrow[r,swap,"{\Gamma}"] & \call{H}_\pi
	\end{tikzcd}
\end{equation}
	where
	\begin{itemize}
		\item If $y_U = +$ and $j < i$ then $F = X_jZ_{j+1} \,\cdots\, Z_{i-1}X_i$. If $i = j + 1$ then $F = X_j X_i$.
		\item If $y_U = +$ and $j > i$ then $F = - X_i Z_i \,\cdots\, Z_j X_j$.
		\item If $y_U = -$ and $j > i$ then $F = X_i Z_{i+1} \,\cdots\, Z_{j-1}X_j$. If $i = j - 1$ then $F = X_i X_j$.
		\item If $y_U = -$ and $j < i$ then $F = X_j Z_j \,\cdots\, Z_i X_i$.
	\end{itemize}
\end{proposition}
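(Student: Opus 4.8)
The plan is to reduce everything to the Jordan--Wigner identities \eqref{eq:iso_sneak_1}--\eqref{eq:iso_sneak_3}, which are precisely the descriptions, under the isomorphism $\Gamma$, of the creation and annihilation operators as Pauli strings. Commutativity of the diagram is the assertion $F = \Gamma^{-1}\Theta_U\Gamma$, and since conjugation by $\Gamma$ is an algebra homomorphism it suffices to conjugate each of the two factors of $\Theta_U$ separately and then multiply the results. So the first step is to expand the definition of the edge operator: for $y_U = +$ we have $\Theta_U = (\psi'_U - (\psi'_U)^*)(\psi_U + \psi_U^*)$, and for $y_U = -$ we have $\Theta_U = -(\psi'_U + (\psi'_U)^*)(\psi_U - \psi_U^*)$, where throughout $\psi_U = \psi_{U_i}$ and $\psi'_U = \psi_{U_j}$.

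Next I conjugate each factor by $\Gamma$. By the Jordan--Wigner lemma, a factor of the form $\psi_{U_k} + \psi_{U_k}^*$ becomes $Z_1 \cdots Z_{k-1} X_k$, while a factor $\psi_{U_k} - \psi_{U_k}^*$ becomes $-Z_1 \cdots Z_k X_k$. Applying this to the two factors in each of the four cases presents $F$ as a product of two Pauli strings: one anchored at $X_j$ with a $Z$-tail reaching up to qubit $j$ (or $j-1$), and one anchored at $X_i$ with a $Z$-tail reaching up to qubit $i$ (or $i-1$), together with the scalar coming from the $\pm$ prefactors in the previous paragraph.

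The third step is to collapse this product into a single Pauli operator. Operators on distinct qubits commute, so the work is purely local: the overlapping $Z$-tails on the qubits below $\min(i,j)$ cancel via $Z^2 = 1$; the $X$ anchoring the shorter string must then be commuted through the surviving $Z$-tail of the longer string, and this is where the case distinction $j<i$ versus $j>i$ enters, since it decides whether that $X_j$ (or $X_i$) has to cross the $Z$ occupying its own qubit in the other factor --- each such crossing contributing a sign through $X_k Z_k = -Z_k X_k$. Finally the anchor qubits carry a residual $Z_k X_k$ (or $X_k Z_k$), which is simplified by the same anticommutation relation. Carrying these reductions through produces the surviving $Z$-interval and the leading sign recorded in each of the four bullet points, with the special cases $i = j\pm 1$ being simply the degenerate instances in which the $Z$-interval is empty.

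The main obstacle is exactly this sign bookkeeping, since a single misplaced anticommutation $X_k Z_k = -Z_k X_k$ changes the leading sign of $F$. To keep it under control I would verify each case directly on the computational basis $\ket{a_1 \cdots a_n}$ of \eqref{eq:iso_sneak}, using that $X_k$ flips the bit $a_k$ and $Z_k$ multiplies by $(-1)^{a_k}$: the two anchoring operators flip precisely the bits $a_i$ and $a_j$, the surviving $Z$-tail contributes a sign governed by the bits strictly between $i$ and $j$, and comparing the resulting permutation-with-signs against the claimed $F$ confirms both the $Z$-interval and the overall sign in each of the four cases. This basis-level check is also the cleanest way to pin down the boundary behaviour of the $Z$-string at the anchor qubits $i$ and $j$, which is the most error-prone part of the purely operator-algebraic reduction.
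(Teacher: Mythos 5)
Your proposal follows essentially the same route as the paper's proof: expand $\Theta_U$ from Definition \ref{def:edge_operators} into its two factors, rewrite each factor as a Pauli string via the Jordan--Wigner identities \eqref{eq:iso_sneak_1} and \eqref{eq:iso_sneak_3}, and then collapse the product of the two strings using $Z_k^2 = 1$ and $X_k Z_k = -Z_k X_k$ to obtain the four cases. The computational-basis verification you append is a reasonable safeguard for the sign bookkeeping but is not part of the paper's argument, which simply carries out the operator-algebraic simplification case by case.
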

\begin{proof}
	By definition $\Theta_U = y_U (\psi_{U_j} - y_U \psi_{U_j}^{\ast})(\psi_{U_i} + y_U \psi_{U_i}^\ast)$. Let $y_U = +$ and $j < i$. The identities \eqref{eq:iso_sneak_1}-\eqref{eq:iso_sneak_3} give $\psi_{U_i} \pm \psi_{U_i}^* = \pm Z_1 \,\cdots\, Z_{i-1} Z_i^{1 \mp 1} X_i$ and similarly for $U_j$. Hence
	\begin{align*}
		\Theta_U &= - Z_1 \,\cdots\, Z_j X_j Z_1 \,\cdots\, Z_{i-1} X_i\\
		&= X_j Z_1 \cdots Z_j Z_1 \cdots Z_{i-1} X_i\\
		&= X_j Z_{j+1} \cdots Z_{i-1} X_i
	\end{align*}
	as claimed. If $y_U = +$ and $j > i$ then
	\begin{align*}
		\Theta_U &= - Z_1 \,\cdots\, Z_j X_j Z_1 \,\cdots\, Z_{i-1} X_i\\
		&= Z_1 \cdots Z_j Z_1 \cdots Z_{i-1} X_i X_j\\
		&= Z_i \cdots Z_{j} X_i X_j\\
		&= - X_i Z_i \cdots Z_j X_j\,.
	\end{align*}
	If $y_U = -$ and $j > i$ then
	\begin{align*}
		\Theta_U &= Z_1 \cdots Z_{j-1} X_j Z_1 \cdots Z_i X_i\\
		&= Z_1 \cdots Z_{j-1} Z_1 \cdots Z_i X_i X_j\\
		&= Z_{i+1} \cdots Z_{j-1} X_i X_j\\
		&= X_i Z_{i+1} \cdots Z_{j-1} X_j\,.
	\end{align*}
	If $y_U = -$ and $j < i$ then
	\begin{align*}
		\Theta_U &= Z_1 \cdots Z_{j-1} X_j Z_1 \cdots Z_i X_i\\
		&= X_j Z_1 \cdots Z_{j-1} Z_1 \cdots Z_i X_i\\
		&= X_j Z_j \cdots Z_i X_i
	\end{align*}
	which completes the proof.
\end{proof}

\begin{corollary} For any oriented atom $(U,y_U)$ the operator $\Theta_U$ belongs to the Pauli group $G_n$ when viewed as an operator on $(\mathbb{C}^2)^{\otimes n}$ using any qubit ordering.
\end{corollary}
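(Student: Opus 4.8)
The plan is to read the result directly off Proposition \ref{prop:explicit_behaviour}. The commutative diagram there says precisely that $\Gamma^{-1} \Theta_U \Gamma = F$, so that when $\Theta_U$ is transported to $(\mathbb{C}^2)^{\otimes n}$ along a qubit ordering via $\Gamma$ it becomes the operator $F$. Since the proposition is stated for an arbitrary qubit ordering $U_1, \ldots, U_n$, it therefore suffices to check, case by case, that each of the four operators $F$ produced by the proposition lies in the Pauli group $G_n$.

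First I would recall that $G_n$ is by definition generated by the single-qubit operators $X, Z$ (together with $Y, -1, i$) acting in each of the $n$ tensor factors, and that as a group it is closed under multiplication. Each of the four expressions for $F$ --- for instance $F = X_j Z_{j+1} \cdots Z_{i-1} X_i$ in the case $y_U = +$, $j < i$ --- is manifestly a finite product of the generators $X_k, Z_k$ together with at most a scalar $-1$. Hence each such $F$ lies in $G_n$, and consequently so does $\Theta_U$ under the identification $\Gamma$.

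The only point requiring a moment's attention is the two cases in which a qubit index is repeated, namely $y_U = +$, $j > i$ (where $F = -X_i Z_i \cdots Z_j X_j$ carries both $X_i$ and $Z_i$ at qubit $i$, and both $Z_j$ and $X_j$ at qubit $j$) and the mirror case $y_U = -$, $j < i$. Here I would use $Y = iXZ$, so that a single-qubit product such as $X_i Z_i$ equals $-i Y_i$; since the scalar $i$ is one of the generators of $G_n$, these products still lie in $G_n$. With this observation the word $F$ is, in every case, a Pauli string times a scalar in $\{\pm 1, \pm i\}$, which is exactly what it means to belong to $G_n$. There is no genuine obstacle: the corollary is immediate once Proposition \ref{prop:explicit_behaviour} has identified $\Theta_U$ with an explicit word in the Pauli generators, and the entire content is the elementary closure observation just described.
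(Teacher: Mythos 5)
Your proof is correct and is essentially the argument the paper intends: the corollary is stated without proof precisely because each of the four expressions for $F$ in Proposition \ref{prop:explicit_behaviour} is visibly a product of the generators $X_k$, $Z_k$ and the scalar $-1$, all of which generate $G_n$. Your extra care about repeated indices (rewriting $X_iZ_i$ via $Y=iXZ$) is harmless but unnecessary, since $G_n$ is closed under arbitrary products of its generators regardless of whether tensor factors repeat.
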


Recall that $\pi$ is an arbitrary proof structure.

\begin{defn}\label{defn:qecc_pi}
	The \emph{stabiliser quantum error-correcting code of $\pi$} is the pair
	\begin{equation}
		\den{\pi} = (\call{H}_\pi, S_\pi)
	\end{equation}
	where $S_\pi$ is the subgroup of the Pauli group generated by the operators $G_\pi = \{ \Theta_U \}_{U}$, with $U$ ranging over oriented atoms appearing in formulas decorating edges in $\pi$ connecting non-conclusion links. The \emph{codespace} of $\pi$ is the invariant subspace
	\begin{equation}
		\call{H}_{\pi}^{S_\pi} = \big\{ \varphi \in \call{H}_\pi \mid X \varphi = \varphi \text{ for all } X \in S_\pi \big\}\,.
		\end{equation}
	\end{defn}

\begin{defn} The \emph{Hamiltonian} of $\pi$ is the self-adjoint operator
	\begin{equation}\label{eq:hamiltonian_pi}
		H_\pi = - \sum_U \Theta_U
	\end{equation}
	where $U$ ranges over oriented atoms appearing in formulas decorating edges in $\pi$ connecting non-conclusion links.
\end{defn}


\begin{defn} Let $\pi$ be a proof structure. A qubit ordering $U_1, \ldots, U_n$ for $\pi$ is \emph{linear} if for every oriented atom $(U, y_U)$ of a formula $A$ decorating an edge $l \lto l'$ in $\pi$ connecting two non-conclusion links with corresponding atoms $U_i \in U_l, U_j \in U_{l'}$ we have
	\begin{itemize}
		\item If $y_U = +$ then $j = i+1$.
		\item If $y_U = -$ then $j = i-1$.
	\end{itemize}
\end{defn}

\begin{lemma}\label{lem:linearorderingexists}
	If $\pi$ is a proof net then there is a a linear qubit order associated to any ordering on the set of persistent paths of $\pi$.
\end{lemma}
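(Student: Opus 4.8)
The plan is to convert the combinatorial data of persistent paths into an explicit linear order on $Q_\pi$ by laying out the qubits of each path at consecutive positions and concatenating the resulting blocks in the prescribed order of the paths. The verification then reduces to a single local sign check.

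First I would recall the relevant structure from \cite{algpnt}. There $U_\pi$ is the set of occurrences of unoriented atoms on the edges of $\pi$, the relation $\sim$ relates two occurrences lying in formulas incident at a common non-conclusion link, and $\approx$ is its transitive closure. Since each occurrence $V \in U_\pi$ lies on a single edge, and at each non-conclusion endpoint of that edge $V$ is $\sim$-related to exactly one further occurrence, namely its partner at that link, the graph on $U_\pi$ with edge set $\sim$ has every vertex of degree at most two, with degree one exactly when the edge of $V$ is incident at a conclusion link. Hence its connected components are paths and cycles. The hypothesis that $\pi$ is a proof \emph{net} enters precisely here: by the correctness criterion (as recorded in \cite{algpnt}) there are no cyclic components, so each component is a simple path. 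These are the persistent paths, and their endpoints sit on edges incident at conclusion links.

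Next I would invoke Lemma \ref{lemma:duality_links_edges}, which identifies $Q_\pi$ with the unordered $\sim$-pairs, i.e. with the edges of the graph on $U_\pi$. Thus a persistent path $V_0 \sim V_1 \sim \cdots \sim V_k$ carries an ordered list of qubits $q_1, \ldots, q_k$, its consecutive $\sim$-edges, and as the paths range over all components these lists partition $Q_\pi$. The adjacency data controlling linearity matches this partition: an internal edge $l \lto l'$ of $\pi$ bearing an atom $U$ contributes a single occurrence $V \in U_\pi$ whose two $\sim$-partners, at $l$ and at $l'$, are exactly the qubits $(l,U)$ and $(l',U)$; so each linearity constraint links two qubits that are consecutive edges of one persistent path, sharing the interior vertex $V$. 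Given an ordering $P_1 < \cdots < P_m$ of the persistent paths, I then define the qubit order by concatenating the lists of each $P_s$ in this order, traversing each $P_s$ in a fixed direction to be chosen below.

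It remains to verify the two clauses of linearity: for each internal edge the partner qubits must land at positions differing by $+1$ or $-1$ according to the sign $y_U$. Since every constraint relates qubits that are consecutive along a single path, and consecutive qubits of a path are placed at consecutive positions by construction, each constraint automatically forces a position difference of $\pm 1$; what must be checked is the \emph{sign}. A list of distinct positions whose consecutive entries differ by $\pm 1$ is necessarily monotone, so I must show that the signs prescribed by the $y_U$ along a fixed traversal of a persistent path all agree, after which choosing the traversal that makes them $+1$ yields $j = i+1$ for $y_U = +$ and $j = i-1$ for $y_U = -$ simultaneously. This sign-coherence is the main obstacle and the point where the precise definition of persistent path is needed: the product of the orientation $y_U$ with the direction of the edge relative to the traversal must be constant along the path. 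I would establish this by a local check at each link type, using that passing a $\otimes$- or $\parr$-link preserves the orientation of an atom while passing an $\ax$- or $\cut$-link reverses it, and that the traversal direction relative to the downward edge orientation reverses at exactly the same links, so that the two reversals cancel. With sign-coherence established, the constructed order satisfies both clauses of linearity, completing the proof.
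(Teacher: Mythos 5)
Your proposal is correct and follows essentially the same route as the paper: identify qubits with consecutive $\sim$-pairs along persistent paths via Lemma \ref{lemma:duality_links_edges}, concatenate the per-path qubit lists, and verify linearity by showing that the orientation $y_U$ of an atom records whether the path traverses its edge with or against that edge's direction. The only difference is presentational: you establish this sign-coherence by a local cancellation argument at each link type (orientation and traversal direction both flip at $\ax$/$\cut$ and both persist at $\otimes$/$\parr$) and then choose the traversal direction of each path, whereas the paper reads the same fact off the explicit global description of persistent paths in \cite[Proposition 4.11]{algpnt}.
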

\begin{proof}
Since $\pi$ is a proof net, by \cite[Proposition 4.11]{algpnt} we can write the set $U_\pi$ of unoriented atoms of $\pi$ \cite[Definition 3.16]{algpnt} as
\begin{align*}
&Z_1, \ldots, Z_{N_1},\\
&Z_{N_1+1}, \ldots, Z_{N_1+N_2},\\
& \qquad \qquad \cdots\\
&Z_{N_1 + \cdots + N_{c-1}+1},\ldots,Z_{N_1 + \cdots + N_{c-1} + N_c} = Z_{n}
\end{align*}
where $\mathscr{P}_i = (Z_{N_1 + \cdots + N_{i-1} + 1},\ldots,Z_{N_1 + \cdots + N_{i}})$ is the $i$th persistent path, of length $N_i$. As sets, persistent paths are the equivalence classes of the relation $\approx$ generated by $\sim$. The proof of the proposition shows that the only unordered pairs $V \sim V'$ are of the form $Z_t \sim Z_{t+1}$ for some consecutive pair in one of the above rows. By Lemma \ref{lemma:duality_links_edges} this means the qubits of $\pi$ are in bijection with consecutive pairs of unoriented atoms in the persistent paths of $\pi$.

If we write $Q_t$ for the qubit associated to $Z_t \sim Z_{t+1}$ (so $Q_{t+1}$ is associated to $Z_{t+1} \sim Z_{t+2}$ as long as $Z_{t+1}$ is not the final unoriented atom in its persistent path) then we claim
\begin{equation}\label{eq:candidate_linear}
Q_1, Q_2, \ldots, Q_{N_1 - 1}, Q_{N_1+1}, Q_{N_1+2}, \ldots, Q_{N_1 + \cdots + N_c - 1}
\end{equation}
is a linear qubit ordering for $\pi$ (the choice of how to globally order the persistent paths is arbitrary). This brings us to the role of the orientations. 

Let $\mathscr{P}$ be a persistent path and $\Bar{\mathscr{P}}$ the corresponding sequence of \emph{oriented} atoms. Roughly speaking, this path goes ``up from a conclusion to an axiom'' then performs a sequence (perhaps empty) of ``dips'' from axioms to cuts back to axioms, before going ``down from an axiom to a conclusion''. Informally the terms ``up'' and ``down'' refer to how we typically draw proof nets on the page, but formally these terms mean respectively \emph{against the edge orientation} and \emph{with the edge orientation}. More precisely, by \cite[Proposition 4.11]{algpnt} the sequence $\Bar{\mathscr{P}}$ begins with a negative atom $(Z_1,-)$ of the conclusion, followed by an \emph{initial segment} $(Z_1,-), (Z_2, y_2),\ldots,(Z_a,y_a)$ consisting of oriented atoms on edges leading up through $\otimes, \parr$ links (and no $(\ax)$,$(\cut)$ links) to the first oriented atom $(Z_a, y_a)$ occurring in a formula labelling an edge incident at an $(\ax)$ link. By inspection $y_2 = \cdots = y_a = -$. The next oriented atom in $\Bar{\mathscr{P}}$ is positive $(Z_{a+1}, +)$ and the atoms remain positive until the path hits a $(\cut)$ link, at which they switch to negative, and so on. The first segment (before the first $(\ax)$) consists entirely of negative atoms, the final segment (after the final $(\ax)$) of positive atoms.

From this we deduce that oriented atoms $(Z,y)$ in $\Bar{\mathscr{P}}$ occur with $y = -$ if the persistent path traverses the edge on which the atom occurs in the \emph{reverse} of its orientation in $\pi$, and occurs with $y = +$ if the persistent path traverses the edge in its normal direction. This is in fact the purpose of these orientations!

We are now prepared to prove \eqref{eq:candidate_linear} is a linear qubit ordering. Let $(U, y_U)$ be an oriented atom of a formula $A$ labelling an edge $l \lto l'$ between non-conclusion links, and let $\mathscr{P}$ be the persistent path containing $U$. Let $Q_i \in U_l, Q_j \in U_{l'}$ be the corresponding atoms. If $y_U = +$ then the persistent path follows the edge in its given orientation, so $j = i + 1$, and if $y_U = -$ then the persistent path follows the edge in the reverse orientation, so $i = j + 1$, as required.
\end{proof}

We have defined the Hilbert space $\call{H}_\pi$ and its Hamiltonian in a way that makes it clear that it depends only on \emph{local} information present at links in $\pi$. However, by paying attention to global structure we can give a simple presentation of the system involved. The relevant global structures in a proof net are the persistent paths.

\begin{thm}
	Let $\pi$ be a proof net with persistent paths $\mathscr{P}_1, \ldots, \mathscr{P}_c$ where the $i$th persistent path contains $N_i$ unoriented atoms. If $n$ is the number of qubits of $\pi$ then
	\begin{equation}
		(N_1 - 1) + \cdots + (N_c - 1) = n
	\end{equation}
	and under the isomorphism
	\begin{equation}
		\Gamma: (\mathbb{C}^2)^{\otimes n} \stackrel{\cong}{\lto} \call{H}_{\pi}
		\end{equation}
	corresponding to the associated linear qubit order, the stabiliser code of $\pi$ corresponds to the tensor product code
	\begin{align*}
		(\mathbb{C}^2)^{\otimes N_1 - 1} \otimes\qquad & \qquad X_1X_2, \ldots, X_{N_1 - 2}X_{N_1-1}\\
		(\mathbb{C}^2)^{\otimes N_2 - 1} \otimes\qquad &\qquad X_{N_1}X_{N_1 + 1}, \ldots, X_{N_1 + N_2-2}X_{N_1 + N_2 - 1},\\
		\qquad\vdots & \qquad\qquad\qquad\qquad\qquad\vdots\\
		(\mathbb{C}^2)^{\otimes N_c - 1}\qquad & \qquad X_{N_1 + \ldots + N_{c-1}}X_{N_1 + \ldots + N_{c-1}}, \ldots, X_{n-1}X_n
		\end{align*}
	\end{thm}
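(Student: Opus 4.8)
The plan is to reduce the statement to the local computation of Proposition~\ref{prop:explicit_behaviour}, once the global bookkeeping supplied by persistent paths is in place. I would first dispose of the dimension count. By Lemma~\ref{lemma:duality_links_edges} the qubits of $\pi$ are in bijection with unordered pairs $V \sim V'$ of unoriented atoms of $U_\pi$, and the analysis in the proof of Lemma~\ref{lem:linearorderingexists} shows that every such pair is a \emph{consecutive} pair $Z_t \sim Z_{t+1}$ lying inside a single persistent path. A persistent path $\mathscr{P}_i$ with $N_i$ unoriented atoms therefore contributes exactly $N_i - 1$ qubits, and summing over the $c$ persistent paths gives $n = (N_1-1) + \cdots + (N_c-1)$, which is the first assertion.

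Next I would fix the linear qubit order furnished by Lemma~\ref{lem:linearorderingexists}, observing that it lists the qubits path by path: the qubits arising from $\mathscr{P}_i$ form a contiguous block of length $N_i - 1$, and consecutive qubits within one path are adjacent in the global order. For each edge operator $\Theta_U$, attached to an oriented atom $(U,y_U)$ on an edge $l \lto l'$ between non-conclusion links, linearity forces the two copies $U_i \in U_l$ and $U_j \in U_{l'}$ to satisfy $\lvert i - j\rvert = 1$. Feeding the relations $j = i+1$ (for $y_U = +$) and $j = i-1$ (for $y_U = -$) into Proposition~\ref{prop:explicit_behaviour}, the intermediate $Z$-strings collapse and $\Theta_U$ becomes an adjacent two-qubit Pauli supported on $\{i,i+1\}$; matching with the distinguished sub-cases $i = j\pm 1$ isolated in that proposition identifies it with $X_iX_{i+1}$. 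This is a short but index-sensitive case check.

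To assemble the tensor-product description I would argue that no edge operator straddles two blocks. An oriented atom $(U,y_U)$ lies on a single persistent path $\mathscr{P}_i$, so the two qubits $\Theta_U$ couples both belong to the block of $\mathscr{P}_i$; hence each generator of $S_\pi$ acts inside one block. Conversely, tracing $\mathscr{P}_i$ through the links it visits shows that its edges between non-conclusion links are exactly those joining consecutive internal links, so the edge operators living in the block of $\mathscr{P}_i$ are precisely the $N_i - 2$ adjacent pairs $X_{a}X_{a+1}, \ldots, X_{a+N_i-3}X_{a+N_i-2}$, where $a$ marks the start of the block; the two boundary edges of the path are conclusion edges and contribute no operator, which accounts for $N_i - 2$ rather than $N_i - 1$. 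Since the generators of $S_\pi$ thus partition into families acting on disjoint tensor factors, $S_\pi$ is the tensor product over the persistent paths of the ``$XX$-chain'' stabiliser codes, which is the claimed form.

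I expect the main obstacle to be this last paragraph: cleanly establishing the bijection between edge operators and the \emph{internal} consecutive-qubit pairs of each persistent path, and in particular that exactly one edge operator sits on each such pair while the boundary (conclusion) edges contribute none. This is where the two perspectives --- unoriented atoms indexed by \emph{edges} (the persistent-path picture of Lemma~\ref{lem:linearorderingexists}) versus by \emph{links} (the qubits entering $\Gamma$) --- must be reconciled through Lemma~\ref{lemma:duality_links_edges}, and it is the step most dependent on the fine structure of persistent paths established in \cite{algpnt}.
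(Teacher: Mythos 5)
Your proposal is correct and takes essentially the same route as the paper, whose entire proof of this theorem is the one-line citation ``Follows from Proposition \ref{prop:explicit_behaviour} and Lemma \ref{lem:linearorderingexists}.'' You have simply supplied the details that citation leaves implicit: the dimension count via the bijection of Lemma \ref{lemma:duality_links_edges} with consecutive pairs inside persistent paths, the collapse of each $\Theta_U$ to an adjacent $X_iX_{i+1}$ under the linear ordering, and the observation that no edge operator straddles two path blocks.
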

	\begin{proof}
	Follows from Proposition \ref{prop:explicit_behaviour} and Lemma \ref{lem:linearorderingexists}.
	\end{proof}

These tensor factors are well-known quantum error-correcting codes, whose associated systems are often referred to as ``quantum wires'' \cite{kitaev_wire}. In this language, the quantum system associated to a proof net $\pi$ is a tensor product of its persistent paths, realised as quantum wires. However this decomposition is \emph{global} structure, not immediately visible at the level of the edge operators $\Theta_U$ which define the \emph{local} structure.

\begin{remark}
	The error-correction procedure $P$ itself can be thought of as \emph{denoting the codespace}, since codewords are characterised by the condition $c = P(c)$. If we have two error-correcting codes $\call{H}' \subseteq \call{H}$ with the same codespace $\call{C}$, then the respectively error-correction procedures $P,P'$ have ``the same denotation but different senses''. The senses are different because in each case we allow a different set of errors. This simple observation is the whole content of the present paper - in this section we explain how we may associate to a proof $\pi$ a quantum error-correcting code $\call{H}, \call{C}, P$ and in the next section we show how we may associate to a reduction $\pi \rightsquigarrow \pi'$ an inclusion of codes $\call{H}' \subseteq \call{H}$.
\end{remark}

\section{Reductions}\label{section:reductions}

We have associated to any proof structure $\pi$ a Hilbert space $\call{H}_\pi$ and stabiliser quantum code $S_\pi$ with generating set $G_\pi$. We denote this code by $\den{\pi} = (\call{H}_\pi, S_\pi)$. In this section we associate to any reduction $\gamma: \pi \lto \pi'$ (Definition \ref{def:reduction}) a morphism of codes
\begin{equation}
	\den{\gamma}: \den{\pi} \lto \den{\pi'}\,.
\end{equation}

\subsection{A category of codes}

 Recall that a stabiliser quantum code $(\call{H}, S)$ consists of a finite-dimensional Hilbert space $\call{H}$ and stabiliser code $S$ on $\call{H}$ (see Definition \ref{defn:stabiliser_code}). A presentation of the Hilbert space as a tensor product of qubits is \emph{not} part of the data. Given a Hilbert space $\call{H}$ we write $U(\call{H})$ for the group of unitary operators. Given a subset $X$ of a group $G$ we denote by $\langle X \rangle$ the subgroup generated by $X$.

\begin{defn} A \emph{correction} $(\call{H}_1, S_1) \lto (\call{H}_2, S_2)$ of stabiliser codes is
	\begin{itemize}
		\item an inner-product preserving (hence injective) linear map $T: \call{H}_2 \lto \call{H}_1$
		\item subgroups $C, D \subseteq S_1$ with $\langle C, D \rangle = S_1$
		\item an isomorphism of groups $\nu: D \lto S_2$
	\end{itemize}
	satisfying the following conditions
	\begin{itemize}
		\item For every $x \in \call{H}_2$ the vector $T(x)$ is $C$-invariant
		\item The induced map $\Bar{T}: \call{H}_2 \lto \call{H}_1^{C}$ is an isomorphism of Hilbert spaces, such that
		\item The following diagram commutes
		\begin{equation}\label{eq:defn_correct}
		\xymatrix@C+3pc{
			U(\call{H}_2) \ar[r]_-{\cong}^-{\Bar{T} \circ (-) \circ \Bar{T}^{-1}} & U(\call{H}_1^C)\\
			S_2 \ar[u] & D \ar[l]^-{\nu}_-{\cong} \ar[u]
		}
		\end{equation}
		where the vertical maps are the canonical inclusion and restriction respectively.
	\end{itemize}
\end{defn}

\begin{remark}\label{remark:pres_invariant} Note that if $x \in \call{H}_1$ is $C$-invariant then $gx$ is $C$-invariant for any $g \in S_1$ since for $c \in C$, we have $cgx = gcx = gx$. Hence by restriction there is a well-defined morphism of groups $D \lto U(\call{H}_1^C)$ giving the right hand vertical map in \eqref{eq:defn_correct}.
\end{remark}

We are going to define a category of stabiliser codes and corrections. Given corrections $(T, C, D, \nu): (\call{H}_1, S_1) \lto (\call{H}_2,S_2)$ and $(T',C',D',\nu'): (\call{H}_2,S_2) \lto (\call{H}_3, S_3)$ we define the composite $(T'', C'', D'', \nu''): (\call{H}_1,S_1) \lto (\call{H}_3,S_3)$ as follows.

\begin{lemma} The data $(T'', C'', D'', \nu'')$ is a correction $(\call{H}_1,S_1) \lto (\call{H}_3,S_3)$ with
    \begin{itemize}
        \item $T'' = T \circ T'$,
        \item $C'' = \langle C, \nu^{-1}(C') \rangle$,
        \item $D'' = \nu^{-1}(D')$.
        \item $\nu''$ is the restriction of of $\nu$ to $D''$ followed by $\nu'$.
    \end{itemize}
\end{lemma}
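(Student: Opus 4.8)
The plan is to check each clause of the definition of a correction for $(T'', C'', D'', \nu'')$ in turn. Three of the conditions are essentially formal. The map $T'' = T \circ T'$ is inner-product preserving as a composite of such maps, and $T'': \call{H}_3 \to \call{H}_1$ has the correct domain and codomain since $T': \call{H}_3 \to \call{H}_2$ and $T: \call{H}_2 \to \call{H}_1$. The map $\nu''$ is a group isomorphism because $\nu|_{D''}: D'' = \nu^{-1}(D') \to D'$ is the restriction of the isomorphism $\nu$ to a subgroup of its domain (hence an isomorphism onto $D'$) and $\nu': D' \to S_3$ is an isomorphism. Both $C'' = \langle C, \nu^{-1}(C')\rangle$ and $D'' = \nu^{-1}(D')$ are subgroups of $S_1$, since $C \subseteq S_1$ and $\nu^{-1}(C'), \nu^{-1}(D') \subseteq D \subseteq S_1$. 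For the generation condition I would use that $\nu: D \to S_2$ is a group isomorphism and that $\langle C', D'\rangle = S_2$, so that $\langle \nu^{-1}(C'), \nu^{-1}(D')\rangle = \nu^{-1}(S_2) = D$ and therefore
\[
\langle C'', D''\rangle = \langle C, \nu^{-1}(C'), \nu^{-1}(D')\rangle = \langle C, D\rangle = S_1\,.
\]

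Next, for $x \in \call{H}_3$ I must show $v := T''(x) = T(T'(x))$ is $C''$-invariant, and since $C$ and $\nu^{-1}(C')$ generate $C''$ it suffices to check invariance under each. Invariance under $C$ is immediate: $T(y)$ is $C$-invariant for every $y \in \call{H}_2$ by the first correction, so $v \in \call{H}_1^C$. For $d \in \nu^{-1}(C') \subseteq D$, I would transport the action across the commuting diagram \eqref{eq:defn_correct} of the first correction. Since $v = \Bar{T}(T'(x))$ we have $\Bar{T}^{-1}(v) = T'(x)$, and the diagram gives $d \cdot v = \Bar{T}(\nu(d)\,T'(x)) = T(\nu(d)\,T'(x))$. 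As $\nu(d) \in C'$ and $T'(x)$ is $C'$-invariant by the second correction, $\nu(d)\,T'(x) = T'(x)$, whence $d \cdot v = v$.

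For the isomorphism condition, the key step is to identify the invariant subspace. Because a vector fixed by a generating set of $C''$ is fixed by all of $C''$, the space $\call{H}_1^{C''}$ consists exactly of the vectors fixed by both $C$ and $\nu^{-1}(C')$. I then claim $\Bar{T}$ restricts to an isomorphism $\call{H}_2^{C'} \xrightarrow{\cong} \call{H}_1^{C''}$: a vector $v = \Bar{T}(w) \in \call{H}_1^C$ is fixed by $\nu^{-1}(C')$ if and only if, by the same transport across \eqref{eq:defn_correct} as above, $w$ is fixed by $C'$. Composing with the isomorphism $\Bar{T'}: \call{H}_3 \xrightarrow{\cong} \call{H}_2^{C'}$ then exhibits $\Bar{T''} = \Bar{T}|_{\call{H}_2^{C'}} \circ \Bar{T'}$ as an isomorphism onto $\call{H}_1^{C''}$. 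This identification of $\call{H}_1^{C''}$ with $\Bar{T}(\call{H}_2^{C'})$ is where the real content lies; I expect it to be the main obstacle, the remaining verifications being routine once it is in place.

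Finally, the commutativity of \eqref{eq:defn_correct} for the composite is a two-stage diagram chase. Given $d'' \in D''$ and $v \in \call{H}_1^{C''}$, write $v = T''(x)$ using the isomorphism just established. Since $d'' \in D$, the first correction gives $d'' \cdot v = T(\nu(d'')\,T'(x))$ with $\nu(d'') \in D'$; applying the second correction to the $C'$-invariant vector $T'(x)$ gives $\nu(d'')\,T'(x) = T'(\nu'(\nu(d''))\,x) = T'(\nu''(d'')\,x)$. Combining, $d'' \cdot v = T''(\nu''(d'')\,x) = \Bar{T''}(\nu''(d'')\,\Bar{T''}^{-1}(v))$, which is exactly the required commutativity. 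One should also note, exactly as in Remark \ref{remark:pres_invariant}, that $D''$ preserves $\call{H}_1^{C''}$ so that the right-hand vertical map of the composite diagram is well-defined; this follows from the commutativity of the stabiliser group $S_1$ (Definition \ref{defn:stabiliser_code}).
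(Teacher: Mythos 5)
Your proposal is correct and follows essentially the same route as the paper: the identical chain of equalities for $\langle C'', D''\rangle = S_1$, and the identification $\call{H}_1^{C''} = (\call{H}_1^{C})^{\,\nu^{-1}(C')} \cong \call{H}_2^{C'} \cong \call{H}_3$ for the isomorphism condition. In fact yours is somewhat more complete, since you spell out the transport of the $\nu^{-1}(C')$-action across the diagram \eqref{eq:defn_correct} and explicitly verify the commutativity condition for the composite, both of which the paper's proof leaves implicit.
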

\begin{proof}
It is clear that $T''$ is inner-product preserving and that $C'', D''$ are subgroups of $S_1$. We have
\begin{align*}
\langle C'', D'' \rangle &= \langle C, \nu^{-1}(C'), \nu^{-1}(D') \rangle\\
&= \langle C, \nu^{-1}(\langle C', D' \rangle) \rangle\\
&= \langle C, \nu^{-1}(S_2) \rangle\\
&= \langle C, D \rangle = S_1\,.    
\end{align*}
Given $x \in \call{H}_3$ we to prove that $T'' = TT'(x)$ is $C''$-invariant. But by hypothesis $T'(x)$ is $C'$-invariant, and $TT'(x)$ is $C$-invariant. If $g \in C''$ then $g T''(x) = h T''(x) = x$ since $h$ corresponds to an element of $C'$.

Next we have to prove that $T''$ induces an isomorphism $\call{H}_3 \cong \call{H}_1^{C''}$ but
\[
\call{H}_1^{C''} = (\call{H}_1^{C})^{C'} \cong \call{H}_2^{C'} \cong \call{H}_3\,.
\]
\end{proof}

\begin{defn} The \emph{category of quantum stabiliser codes} $\mathcal{SC}$ has as objects the stabiliser codes $(\call{H}, S)$ with corrections as morphisms.
\end{defn}



Given a reduction $\pi \lto \pi'$ we will define a map $\gamma: \llbracket \pi \rrbracket \lto \llbracket \pi' \rrbracket$ depending on what type of reduction $\gamma$ is, see Section \ref{sec:proof_nets}. 

\begin{defn} We label the relevant links of $\pi,\pi'$ according to the following diagram.
	\begin{equation}\label{eq:a_redex_labelling}
	\begin{tikzcd}[column sep = small, row sep = small]
		& {\stackrel{l_{\ax}}{\ax}} &&& {\stackrel{l}{\bullet}} & {\stackrel{l}{\bullet}} \\
		A && {\neg A} && A & A \\
		\stackrel{m}{\bullet} &&& {\stackrel{l_{\cut}}{\cut}} && \stackrel{m}{\bullet}
		\arrow[from=2-1, to=3-1]
		\arrow[curve={height=12pt}, no head, from=1-2, to=2-1]
		\arrow[curve={height=-12pt}, no head, from=1-2, to=2-3]
		\arrow[curve={height=12pt}, from=2-3, to=3-4]
		\arrow[curve={height=-12pt}, from=2-5, to=3-4]
		\arrow[no head, from=1-5, to=2-5]
		\arrow[no head, from=1-6, to=2-6]
		\arrow[from=2-6, to=3-6]
	\end{tikzcd}
\end{equation}
	For each oriented atom $(U,y)$ of $A$ we define a $\bb{Z}_2$-degree zero map for $y = +$ by:
	\begin{align}
		\gamma_U: \bigwedge \bb{C} \psi_U^l &\lto \bigwedge \bb{C} \psi_U^l \otimes \bigwedge \bb{C} \psi_U^{l_{\cut}} \otimes \bigwedge \bb{C} \psi_U^{l_{\ax}}\label{eq:oriented_ax_cod}\\
		\ket{j} &\longmapsto \frac{1}{\sqrt{2}}(\ket{+++} + (-1)^j\ket{---})
	\end{align}
	If $y = -$ then $\gamma_U$ has the same domain and formula, but its codomain is:
	\begin{equation}
		\bigwedge \bb{C}\psi_{U}^{l_{\ax}} \otimes \bigwedge \bb{C}\psi_{U}^{l_{\cut}} \otimes \bigwedge \bb{C}\psi_U^{l}
	\end{equation}
	If $m \neq l$ is a link of $\pi'$ and $V$ an unoriented atom of $m$, then $m$ is in $\pi$ and we define $\gamma_V: \bigwedge \bb{C}\psi_V^m \lto \bigwedge \bb{C}\psi_V^m$ to be the identity.
	
	Assume now that we have a linear order $U_1 < \hdots < U_r$ of $\pi$. Then in all cases of $U,V$ above, post composing with an inclusion induces a morphism with codomain:
	\begin{equation}
		\bigwedge\bb{C}\psi_{U_1} \otimes \hdots \otimes \bigwedge\bb{C}\psi_{U_r}
		\end{equation}
	Assuming now that $V_1 < \hdots < V_{r'}$ is a linear order of $\pi'$, we tensor over all morphisms considered to thus obtain a morphism:
	\begin{equation}
		\bigwedge\bb{C}\psi_{V_1} \otimes \hdots \otimes \bigwedge\bb{C}\psi_{V_{r'}} \lto \bigwedge \bb{C} \psi_{U_1} \otimes \hdots \otimes \bigwedge \bb{C}\psi_{U_r}
		\end{equation}
	Finally, pre and post composing with the appropriate isomorphisms we obtain the morphism of interest:
	\begin{equation}
		\gamma: \call{H}_{\pi'} \lto \call{H}_\pi
	\end{equation}

Now we define the subset $C_{\pi} \subseteq S_\pi$. Let $\call{A}$ be the unoriented atoms of $\neg A$ and hence also of $A$.
\begin{equation}
	C_{\pi} = \lbrace \Theta^{l_{\ax} \lto l_{\cut}}_U\rbrace_{U \in \call{A}} \cup \lbrace \Theta^{l \lto l_{\cut}}_U\rbrace_{U \in \call{A}}
	\end{equation}
Next, we define $\gamma$ in the case when $\pi \lto \pi'$ reduces an $m$-redex. For convenience, we label the links involved in the reduction according to the following Diagram (note: there may be some equalities among $m_1,m_2,m_3,m_4$). As in the $a$-redex case, the labels $m_1,m_2,m_3,m_4$ will be used in the proof of Lemma \ref{lem:isomorphism} but not in the current Definition.
\begin{equation}\label{eq:m_redex_labelling}
\begin{tikzcd}[column sep = small, row sep = small]
	{} && {\stackrel{m_1}{\bullet}} && {\stackrel{m_2}{\bullet}} && {\stackrel{m_3}{\bullet}} && {\stackrel{m_4}{\bullet}} \\
	&& A && B && {\neg A} && {\neg B} \\
	&&& \otimes &&&& \parr \\
	&&& {l_{\otimes}\quad A \otimes B} &&&& {\neg A \parr \neg B\quad l_{\parr}} \\
	&&&&& {\stackrel{m}{\cut}} \\
	&& {\stackrel{m_1}{\bullet}} && {\stackrel{m_2}{\bullet}} && {\stackrel{m_3}{\bullet}} && {\stackrel{m_4}{\bullet}} \\
	&& A && B && {\neg A} && {\neg B} \\
	&&&& {\stackrel{a}{\cut}} &&& {\stackrel{b}{\cut}}
	\arrow[curve={height=12pt}, from=2-3, to=3-4]
	\arrow[curve={height=-12pt}, from=2-5, to=3-4]
	\arrow[curve={height=12pt}, from=2-7, to=3-8]
	\arrow[curve={height=-12pt}, from=2-9, to=3-8]
	\arrow[no head, from=1-7, to=2-7]
	\arrow[no head, from=1-9, to=2-9]
	\arrow[no head, from=3-8, to=4-8]
	\arrow[curve={height=-12pt}, from=4-8, to=5-6]
	\arrow[no head, from=3-4, to=4-4]
	\arrow[no head, from=1-3, to=2-3]
	\arrow[no head, from=1-5, to=2-5]
	\arrow[curve={height=-12pt}, from=7-7, to=8-5]
	\arrow[curve={height=12pt}, from=7-3, to=8-5]
	\arrow[curve={height=12pt}, from=7-5, to=8-8]
	\arrow[curve={height=-12pt}, from=7-9, to=8-8]
	\arrow[no head, from=6-9, to=7-9]
	\arrow[no head, from=6-7, to=7-7]
	\arrow[no head, from=6-5, to=7-5]
	\arrow[no head, from=6-3, to=7-3]
	\arrow[curve={height=12pt}, from=4-4, to=5-6]
\end{tikzcd}
\end{equation}
For each oriented atom $(U,y_u)$ of $A$ and $(V,y_v)$ of $B$ we define $\bb{Z}_2$-degree zero maps:
\begin{align}
	\gamma_U: \bigwedge \bb{C} \psi_U^a \lto \bigwedge \bb{C}\psi_U^\parr \otimes \bigwedge \bb{C}\psi_U^{\cut} \otimes \bigwedge \bb{C}\psi_U^{\otimes},\qquad y_u = +\\
	\gamma_U: \bigwedge \bb{C} \psi_U^a \lto \bigwedge \bb{C}\psi_U^\otimes \otimes \bigwedge \bb{C}\psi_U^{\cut} \otimes \bigwedge \bb{C}\psi_U^{\parr},\qquad y_u = -\\
	\gamma_V: \bigwedge \bb{C} \psi_V^b \lto \bigwedge \bb{C}\psi_V^\parr \otimes \bigwedge \bb{C}\psi_V^{\cut} \otimes \bigwedge \bb{C}\psi_V^{\otimes},\qquad y_v = +\\
	\gamma_V: \bigwedge \bb{C} \psi_V^b \lto \bigwedge \bb{C}\psi_V^{\otimes} \otimes \bigwedge \bb{C}\psi_V^{\cut} \otimes \bigwedge \bb{C}\psi_V^{\parr},\qquad y_v = -
	\end{align}
all by the following formula.
\begin{equation}
	\ket{j} \longmapsto \frac{1}{\sqrt{2}}(\ket{+++} + (-1)^j\ket{---})
\end{equation}
Following the same procedure as in the case when the reduction $\pi \lto \pi'$ reduced an $a$-redex, we tensor over all links with respect to the order given by the linear order on $\pi$ and then compose with the relevant isomorphisms to obtain the following, injective, $\bb{Z}_2$-degree zero map
\begin{equation}
	\gamma: \call{H}_{\pi'} \lto \call{H}_\pi
	\end{equation}
Finally, we define $C_{\pi}$ in this case. Let $\call{A}$ denote the unoriented atoms of $A$ (and hence of $\neg A$) and $\call{B}$ that of $B$ (and hence of $\neg B$).
\begin{equation}
	C_{\pi} = \lbrace \Theta^{l_{\otimes} \lto l_{\cut}}_U, \Theta^{l_{\otimes} \lto l_{\cut}}_V\rbrace_{U\in \call{A}, V \in \call{B}} \cup \lbrace \Theta^{l_{\parr} \lto l_{\cut}}_U, \Theta^{l_{\parr} \lto l_{\cut}}_V \rbrace_{U \in \call{A}, V \in \call{B}}
	\end{equation}
\end{defn}
The next Lemma states that the map $\gamma$ factors through an isomorphism.

\begin{lemma}\label{lem:isomorphism}
	There exists an isomorphism $\call{H}_{\pi'} \lto \call{H}_{\pi}^{C_{\pi}}$ such that the following diagram commutes, where $\operatorname{inc}: \call{H}_{\pi}^{C_\pi} \lto \call{H}_{\pi}$ is an inclusion.
	\begin{equation}
		\begin{tikzcd}
	\call{H}_{\pi'}\arrow[dr,"{\cong}"]\arrow[rr,"{\gamma}"] && \call{H}_{\pi}\\
	& \call{H}^{C_{\pi}}\arrow[ur,"{\operatorname{inc}}"]
			\end{tikzcd}
		\end{equation}
	\end{lemma}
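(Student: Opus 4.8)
The plan is to recognise $\gamma$ as a tensor product of elementary isometric maps and to identify $C_\pi$, in the linear ordering, as a tensor product of copies of the three-qubit code of Example \ref{example:three_qubit}; the Lemma then reduces to the computation of a single joint eigenspace.

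First I fix a linear qubit ordering for $\pi$ (which exists by Lemma \ref{lem:linearorderingexists}) and use the isomorphism $\Gamma$ of Definition \ref{defn:iso_gamma} to transport everything to $(\mathbb{C}^2)^{\otimes n}$. In this picture $\gamma$ is, by construction, a tensor product of two kinds of factors: for each oriented atom $(U,y)$ of the cut formula $A$ (and, in the $m$-redex case, of $B$) the elementary \emph{expansion} map $e\colon \mathbb{C}^2 \lto (\mathbb{C}^2)^{\otimes 3}$, $\ket{j}\mapsto \tfrac{1}{\sqrt 2}(\ket{+++}+(-1)^j\ket{---})=\ket{\widetilde j}$, sending the single $\pi'$-qubit $\psi_U^{l}$ (resp.\ $\psi_U^{a}$) to the triple of $\pi$-qubits $\psi_U^{l},\psi_U^{l_{\cut}},\psi_U^{l_{\ax}}$ with the cut copy in the middle; and, for every remaining (\emph{spectator}) qubit, the identity map. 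Two things must be shown: that $\gamma$ is an isometry, and that its image is exactly $\call{H}_\pi^{C_\pi}$.

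Next I analyse $C_\pi$. By definition it is built from the edge operators attached to the edges $l_{\ax}\lto l_{\cut}$ and $l\lto l_{\cut}$ (resp., in the $m$-redex case, $l_\otimes\lto l_{\cut}$ and $l_\parr\lto l_{\cut}$), all of which join non-conclusion links, so that $C_\pi\subseteq S_\pi$. Applying Proposition \ref{prop:explicit_behaviour} together with the linearity of the ordering, each such $\Theta_U$ is supported on the three qubits of the corresponding triple, and on that triple it restricts to a nearest-neighbour two-qubit stabiliser; the two operators attached to a given atom share the middle (cut) qubit and therefore reproduce, up to an evident change of local basis, the pair $X_1X_2,\,X_2X_3$ of Example \ref{example:three_qubit}. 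The decisive structural point is that distinct triples occupy disjoint sets of qubits, so that under $\Gamma$ the group $C_\pi$ is the tensor product, over all cut-atoms, of independent three-qubit codes, tensored with the identity on the spectator qubits. The careful bookkeeping of the orientations $y_U$ (and the signs they produce in Proposition \ref{prop:explicit_behaviour}), needed to confirm that the two stabilisers of each triple are exactly the pair whose joint $+1$-eigenvectors are the states $\ket{\widetilde 0},\ket{\widetilde 1}$ defining $e$, is the main technical obstacle; everything else is formal.

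Granting this, the conclusion is immediate. Because the generators of $C_\pi$ are supported on pairwise disjoint triples and act as the identity elsewhere, the invariant subspace factorises as a tensor product, over the triples, of the corresponding joint $+1$-eigenspaces, tensored with the whole spectator space. By Example \ref{example:three_qubit} each triple eigenspace is two-dimensional and spanned by $\ket{\widetilde 0},\ket{\widetilde 1}$, i.e.\ by the image of the expansion map $e$; thus $\call{H}_\pi^{C_\pi}$ is precisely the image of $\gamma$. Finally $e$ is an isometry since $\ket{\widetilde 0},\ket{\widetilde 1}$ are orthonormal by Lemma \ref{lemma:bell_states_homogeneous}, and a tensor product of isometries is an isometry, so $\gamma$ is injective and inner-product preserving. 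Corestricting $\gamma$ to its image yields the desired isomorphism $\overline{\gamma}\colon\call{H}_{\pi'}\xrightarrow{\ \cong\ }\call{H}_\pi^{C_\pi}$ with $\gamma=\operatorname{inc}\circ\overline{\gamma}$, which is exactly the asserted commutative triangle. The $m$-redex case is identical, the only difference being that there are now two families of triples, indexed by the atoms of $A$ and of $B$, each family forming its own collection of three-qubit codes.
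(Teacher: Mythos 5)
Your proof is correct and follows essentially the same route as the paper: both reduce, via Proposition \ref{prop:explicit_behaviour} and the linear ordering, to the observation that the two generators of $C_\pi$ attached to each cut-atom act as $X_1X_2$ and $X_2X_3$ on a triple of qubits whose joint $+1$-eigenspace is spanned by $\ket{\widetilde{0}}, \ket{\widetilde{1}}$, i.e.\ by the image of $\gamma_U$. The only divergence is the final step, where the paper establishes $\operatorname{Im}(\gamma) \subseteq \call{H}_{\pi}^{C_\pi}$ and concludes equality by a dimension count, while you compute $\call{H}_{\pi}^{C_\pi}$ directly as a tensor product over the disjoint triples; both are valid.
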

\begin{proof}
	There are two similar cases, when $\pi \lto \pi'$ reduces an $a$-redex, and when $\pi \lto \pi'$ reduces an $m$-redex. We prove the case when $\pi \lto \pi'$ reduces an $a$-redex and omit the remaining details.
	
	If $A$ has $n$ unoriented atoms then $\operatorname{dim}_{\bb{C}}\call{H}_\pi = (2^{n})^2\operatorname{dim}_{\bb{C}}\call{H}_{\pi'}$ and $|C_\pi| = 2n$, so by a dimension argument it suffices to show $\operatorname{Im}(\gamma) \subseteq \call{H}_\pi^{C_\pi}$.
	
	This can be done one unoriented atom $U$ at a time, so it reduces to showing that $\operatorname{Im}(\gamma_U)$ is contained in the part of the right hand side of $\eqref{eq:oriented_ax_cod}$ invariant under $\Theta_U^{\ax}, \Theta_U^l$. Let $y$ be such that $(U,y)$ is an oriented atom of $A$. Then if $y = +$ by Proposition \ref{prop:explicit_behaviour}, $\Theta^{\ax}_U, \Theta_U^l$ act respectively as $X_2X_3$ and $X_1X_2$ on the following space.
	\begin{equation}
		\bigwedge \bb{C} \psi_U^l \otimes \bigwedge \bb{C}\psi_U^{\cut} \otimes \bigwedge \bb{C}\psi_U^{\ax}
	\end{equation}
	If $y = -$ then $\Theta_U^{\ax}, \Theta_U^l$ act as (respectively) $X_1X_2, X_2X_3$ on the following space.
	\begin{equation}
		\bigwedge \bb{C}\psi_U^{\ax} \bigwedge \bb{C}\psi_U^{\cut} \otimes\bigwedge \bb{C} \psi_U^l
	\end{equation}
	So it suffices to show $\frac{1}{\sqrt{2}}(\ket{+++} + (-1)^j\ket{---})$ is invariant under $\lbrace X_1X_2, X_2X_3\rbrace$ which is clear.
\end{proof}

\begin{lemma}\label{lem:commuting_square}
	For each $g \in S_\pi\setminus C_\pi$ there is a unique $g' \in S_{\pi'}$ such that the diagram below commutes:
	\begin{equation}
		\begin{tikzcd}\label{eq:bijective_square}
			\call{H}_{\pi'}\arrow[r,"{\gamma}"]\arrow[d,"{g'}"] & \call{H}_{\pi}\arrow[d,"{g}"]\\
			\call{H}_{\pi'}\arrow[r,swap,"{\gamma}"] & \call{H}_{\pi}
			\end{tikzcd}
		\end{equation}
	and this map $g \longmapsto g'$ is a bijection $S_{\pi}\setminus C_{\pi} \lto S_{\pi'}$.
	\end{lemma}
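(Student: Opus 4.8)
The plan is to realise $g'$ as the restriction of $g$ to the codespace, transported across the isomorphism of Lemma \ref{lem:isomorphism}, and then to identify that restriction explicitly as a Pauli operator lying in $S_{\pi'}$. First I would settle existence and uniqueness of $g'$ as an operator. Since $S_\pi$ is a stabiliser group it is commutative (Remark \ref{remark:stab_prop}), so any $g \in S_\pi$ commutes with every element of $C_\pi \subseteq S_\pi$; hence $g$ preserves the invariant subspace $\call{H}_\pi^{C_\pi}$, because $cx = x$ for all $c \in C_\pi$ forces $c(gx) = g(cx) = gx$. By Lemma \ref{lem:isomorphism} the map $\gamma$ is an isometric isomorphism $\bar\gamma\colon \call{H}_{\pi'} \lto \call{H}_\pi^{C_\pi}$, so $g' := \bar\gamma^{-1} \circ (g|_{\call{H}_\pi^{C_\pi}}) \circ \bar\gamma$ is the unique operator on $\call{H}_{\pi'}$ satisfying $\gamma g' = g \gamma$, uniqueness being immediate from injectivity of $\gamma$. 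This assignment $g \mapsto g'$ is a group homomorphism $S_\pi \lto U(\call{H}_{\pi'})$, and since each $c \in C_\pi$ acts as the identity on its own invariant subspace $\call{H}_\pi^{C_\pi}$, the subgroup $C_\pi$ lies in its kernel.

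The substance of the lemma is then to show that each $g'$ actually lies in $S_{\pi'}$ and that the assignment lands bijectively onto $S_{\pi'}$. I would argue generator-by-generator, using the linear qubit orderings of Lemma \ref{lem:linearorderingexists} in which, by Proposition \ref{prop:explicit_behaviour} and the tensor product Theorem, every edge operator $\Theta_U$ becomes a nearest-neighbour product $X_iX_{i+1}$ along the quantum wire of a persistent path. For a generator supported away from the redex, $\gamma$ acts as the identity on the qubits involved, so $g'$ equals the same operator, now read as a generator of $S_{\pi'}$. For each $U \in \call{A}$ the two generators $\Theta_U^{l \lto l_{\cut}}, \Theta_U^{l_{\ax} \lto l_{\cut}} \in C_\pi$ map to the identity. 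The remaining redex generator, $\Theta_U^{l_{\ax} \lto m}$ in the $a$-redex case, is computed on the Bell-like image states $\tfrac{1}{\sqrt2}(\ket{+++} + (-1)^j\ket{---})$ defining $\gamma_U$: using $X\ket{+} = \ket{+}$ and $X\ket{-} = -\ket{-}$ exactly as in Example \ref{example:three_qubit}, one checks that $\Theta_U^{l_{\ax} \lto m}$ restricts to the merged-edge operator $\Theta_U^{l \lto m}$, a generator of $S_{\pi'}$. The $m$-redex case is the same computation run in parallel across the four wires attached to $A, B, \neg A, \neg B$ via the four maps $\gamma_U, \gamma_V$, with the merged operators $\Theta_U^{l_\otimes \lto a}$-type generators replacing the redex triples.

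Finally I would deduce the correspondence. The images just computed are precisely a generating set for $S_{\pi'}$ (the merged-edge operators together with the untouched ones), so the homomorphism is surjective; a rank comparison — $C_\pi$ is generated by $2n$ operators, matching the codimension recorded in Lemma \ref{lem:isomorphism}, and the nearest-neighbour generators along each wire are independent by the Theorem — shows the kernel is exactly $C_\pi$, so $g \mapsto g'$ descends to an isomorphism $S_\pi / C_\pi \xrightarrow{\ \cong\ } S_{\pi'}$. Equivalently, choosing the complement $D$ generated by the untouched operators together with $\{\Theta_U^{l_{\ax} \lto m}\}_{U}$, the map restricts to a bijection $D \lto S_{\pi'}$ with $\langle C_\pi, D\rangle = S_\pi$, which is exactly the correspondence asserted and the data the downstream correction $\den{\gamma}$ requires.

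I expect the main obstacle to be the explicit verification in the middle paragraph: keeping track of the Koszul/Pauli signs and the $Z$-strings that Proposition \ref{prop:explicit_behaviour} attaches to $\Theta_U$ in the various $y_U = \pm$ cases, and checking that the restricted operator is a \emph{genuine Pauli element} of $S_{\pi'}$ rather than merely some unitary fixing the codespace — and doing this coherently across all four wires in the $m$-redex. A secondary subtlety worth flagging is that, since $C_\pi$ acts trivially on the codespace, $g \mapsto g'$ is constant on cosets of $C_\pi$, so the literal set $S_\pi \setminus C_\pi$ is not mapped injectively; the genuine bijection is the one onto $S_{\pi'}$ from the complement $D$ (equivalently the induced isomorphism on $S_\pi/C_\pi$), and I would phrase the conclusion in those terms.
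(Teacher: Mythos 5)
Your proof is correct, and its computational core is the same as the paper's: both arguments proceed generator by generator, send an edge operator supported away from the redex to ``the same'' operator in $\pi'$, and identify the image of the remaining redex generator by the explicit check that $X_1X_2$ (or $X_3X_4$ in the $m$-redex case) acts on $\tfrac{1}{\sqrt{2}}(\ket{+++}+(-1)^j\ket{---})$ by flipping $j$, so that $\Theta_U^{l_{\ax}\lto m}$ intertwines through $\gamma_U$ with the merged-edge operator $\Theta_U^{l\lto m}$. You add two things the paper does not. First, the paper merely exhibits $g'$ and checks the square; you first obtain existence and uniqueness abstractly as $g'=\bar{\gamma}^{-1}\circ g|_{\call{H}_\pi^{C_\pi}}\circ\bar{\gamma}$, using commutativity of $S_\pi$ to see that $g$ preserves the codespace, which packages the assignment as a group homomorphism on all of $S_\pi$ with $\langle C_\pi\rangle$ in its kernel. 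Second, your closing caveat is a genuine and correct observation about the statement itself: with $S_\pi$ read as the group of Definition \ref{defn:qecc_pi}, the set $S_\pi\setminus C_\pi$ is not mapped injectively, since the map is constant on cosets of $\langle C_\pi\rangle$; the paper's own proof tacitly reads $S_\pi\setminus C_\pi$ as the set of \emph{generators} not in $C_\pi$ (it opens with ``any $g\in S_\pi\setminus C_\pi$ is an edge operator''), under which reading the bijection is the evident correspondence of edges, while your reformulation as an isomorphism from a complementary subgroup $D$ with $\langle C_\pi,D\rangle=S_\pi$ is precisely the data the downstream definition of a correction consumes. The one step you should make explicit in your rank comparison is that $\langle C_\pi\rangle\cap D$ is trivial; this does follow from the independence of the nearest-neighbour generators $X_iX_{i+1}$ along each quantum wire in a linear qubit ordering, but it is not automatic from counting generators alone.
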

\begin{proof}
	First we consider the case when $\pi \lto \pi'$ reduces an $a$-redex.

	Any $g \in S_{\pi}\setminus C_\pi$ is an edge operator $\Theta_U$ associated to an oriented atom $U$ on an edge in $\pi$ not equal to the edges $l_{\ax} \lto l_{\cut}, l \lto l_{\cut}$ shown in \eqref{eq:a_redex_labelling}. If the edge does not begin at $l_{\ax}$ then the claim is easily verified, as the same edge exists in $\pi'$. Suppose the edge begins at $l_{\ax}$ so $(U,y_u)$ is an oriented atom of $\neg A$ and the edge is $l_{\ax} \lto m$ with $m$ not a conclusion. We take $g'$ to be $\Theta_U^{l\lto m}$, which is an edge operator associated to an edge in $\pi'$.
	
	If $y_u = +$ then $U$ is negatively oriented in $\neg A$ and by Proposition \ref{prop:explicit_behaviour}, commutativity of \eqref{eq:bijective_square} follows from commutativity of the following Diagram. In what follows, the morphism $\operatorname{id}$ is the identity on the space $\bigwedge \bb{C}\psi_U^m$.
	\begin{equation}
		\begin{tikzcd}[column sep = huge]
			\bigwedge \bb{C} \psi_U^m \otimes \bigwedge \bb{C}\psi_U^l \arrow[r,"{\operatorname{id} \otimes \gamma_U}"]\arrow[d,swap,"{X_1X_2}"]& \bigwedge \bb{C}\psi_U^m \otimes \bigwedge \bb{C}\psi_U^{l_{\ax}} \otimes \bigwedge \bb{C}\psi_U^{l_{\cut}} \otimes \bigwedge \bb{C}\psi_U^{l}\arrow[d,"{X_1X_2}"]\\
			\bigwedge \bb{C}\psi_U^m \otimes \bigwedge \bb{C} \psi_U^l \arrow[r, "{\operatorname{id} \otimes \gamma_U}"] & \bigwedge \bb{C} \psi_U^m \otimes \bigwedge \bb{C}\psi_U^{l_{\ax}} \otimes \bigwedge \bb{C}\psi_U^{l_{\cut}} \otimes \bigwedge \bb{C}\psi_U^l\bigwedge
			\end{tikzcd}
		\end{equation}
	This can be checked by the following calculation, in what follows the notation $\overline{i}$ denotes a bitflip, that is, $\overline{i} =0$ if $i = 1$ and $\overline{i} = 1$ if $i = 0$, similarly for $j$.
	\begin{align*}
		X_1X_2(\operatorname{id} \otimes \gamma_U)\ket{ij} &= \frac{1}{\sqrt{2}}X_1X_2(\ket{i+++} + (-1)^j\ket{i---})\\
		&= \frac{1}{\sqrt{2}}(\ket{\overline{i}+++} + (-1)^{j+1}\ket{\overline{i}---})\\
		&= (\operatorname{id} \otimes \gamma) \ket{\overline{i} \overline{j}}\\
		&= (\operatorname{id} \otimes \gamma) X_1 X_2 \ket{ij}
		\end{align*}
	
If $y_u = -$ then the argument is similar; $U$ is positively oriented in $\neg A$ and commutativity of \eqref{eq:bijective_square} follows by Proposition \ref{prop:explicit_behaviour} from commutativity of the following Diagram. In what follows, the morphism $\operatorname{id}$ is the identity on the space $\bigwedge \bb{C}\psi_U^m$.
\begin{equation}
	\begin{tikzcd}[column sep = huge]
		\bigwedge \bb{C}\psi_U^m \otimes \bigwedge \bb{C} \psi_U^l\arrow[r,"{\operatorname{id} \otimes \gamma_U}"]\arrow[d,swap,"{X_1X_2}"] & \bigwedge \bb{C}\psi_U^l \otimes \bigwedge\bb{C}\psi_U^{\cut} \otimes \bigwedge \bb{C}\psi_U^{\ax} \otimes \bigwedge \bb{C}\psi_U^m\arrow[d, "{X_1X_2}"]\\
		\bigwedge \bb{C} \psi_U^m \otimes \bigwedge \bb{C} \psi_U^l\arrow[r,swap,"{\operatorname{id} \otimes \gamma_U}"] & \bigwedge \bb{C}\psi_U^l \otimes \bigwedge \bb{C}\psi_U^{\cut} \otimes \bigwedge \bb{C} \psi_U^{\ax} \otimes \bigwedge \bb{C} \psi_U^m
		\end{tikzcd}
	\end{equation}
To see this, we observe the following calculation, in what follows the notation $\overline{i}$ denotes a bitflip, that is, $\overline{i} =0$ if $i = 1$ and $\overline{i} = 1$ if $i = 0$, similarly for $j$.
\begin{align*}
	X_1X_2(\operatorname{id} \otimes \gamma_U)\ket{ij} &= \frac{1}{\sqrt{2}}X_1X_2(\ket{i+++} + (-1)^j\ket{i---})\\
	&= \frac{1}{\sqrt{2}}(\ket{\overline{i}+++} + (-1)^{j+1}\ket{\overline{i}---})\\
	&= (\operatorname{id} \otimes \gamma) \ket{\overline{i}\overline{j}}\\
	&= (\operatorname{id} \otimes \gamma_U) X_1 X_2 \ket{ij}
	\end{align*}
The claim about a bijection follows immediately from the fact that $\gamma_U$ is injective.

Now we consider the case where $\pi \lto \pi'$ reduces an $m$-redex, we refer to \eqref{eq:m_redex_labelling}.

We show the details for the edge operator $\Theta_U^{m_1 \lto l_{\otimes}}$. We take $g'$ to be $\Theta_U^{m_1 \lto a}$, which is an edge operator corresponding to an edge in $\pi'$. We check the $y_u = +$ case. It suffices by Proposition \ref{prop:explicit_behaviour} to show that the following diagram commutes
\begin{equation}
	\begin{tikzcd}[column sep = huge]
		\bigwedge \bb{C} \psi_U^a \otimes \bigwedge \bb{C}\psi_U^{m_1}\arrow[r,"{\gamma_U \otimes \operatorname{id}}"]\arrow[d,swap,"{X_1X_2}"] & \bigwedge \bb{C}\psi_U^{l_{\parr}} \otimes \bigwedge \bb{C} \psi_U^{l_{\cut}} \otimes \bigwedge \bb{C}\psi_U^{l_{\otimes}} \otimes \bigwedge \bb{C} \psi_U^{m_1}\arrow[d,"{X_3X_4}"]\\
		\bigwedge \bb{C}\psi_U^{a} \otimes \bigwedge \bb{C}\psi_U^{m_1}\arrow[r,"{\gamma_U \otimes \operatorname{id}}"] & \bigwedge \bb{C}\psi_U^{l_{\parr}} \otimes \bigwedge \bb{C}\psi_U^{l_{\cut}} \otimes \bigwedge \bb{C}\psi_U^{l_{\otimes}} \otimes \bigwedge \bb{C}\psi_U^{m_1}
		\end{tikzcd}
	\end{equation}
Again, this follows from a calculation.
\begin{align*}
X_3X_4 (\gamma_U \otimes \operatorname{id})\ket{ji} &= \frac{1}{\sqrt{2}}X_3X_4(\ket{+++i} + (-1)^j\ket{---i})\\
&= \frac{1}{\sqrt{2}}(\ket{+++\overline{i}} + (-1)^{\overline{j}\ket{---\overline{i}}})\\
&= (\gamma_U \otimes \operatorname{id})\ket{\overline{j}\overline{i}}\\
&= (\gamma_U \otimes \operatorname{id})X_1X_2\ket{ji}
\end{align*}
	\end{proof}
Propositio \ref{prop:explicit_behaviour} and Lemma \ref{lem:commuting_square} fit together into the following Theorem.
\begin{thm}[The Reduction Theorem]\label{thm:cut_model}
	For each reduction $\gamma: \pi \lto \pi'$ there exists a subset $C_\pi \subseteq S_\pi$ and an isomorphism:
	\begin{equation}
		\hat{\gamma}: \call{H}_{\pi'} \lto \call{H}_\pi^{C_\pi}
	\end{equation}
	such that for every $g \in S_\pi \setminus C_\pi$ there is a unique $g' \in S_{\pi'}$ making the following diagram commute:
	\begin{equation}\label{eq:cut_model_square}
		\begin{tikzcd}
			\call{H}_{\pi'}\arrow[r,"{\hat{\gamma}}"]\arrow[d,"{g'}"] & \call{H}_{\pi}^{C_\pi}\arrow[d,swap,"{g}"]\\
			\call{H}_{\pi'}\arrow[r,"\hat{\gamma}"] & \call{H}_{\pi}^{C_\pi}
		\end{tikzcd}
	\end{equation}
	and this map $g \longmapsto g'$ is a bijection $S_\pi \setminus C_\pi \lto S_{\pi'}$.
\end{thm}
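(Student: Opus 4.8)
The plan is to assemble the statement directly from the two preceding lemmas, which already carry all of the real content; the theorem is essentially their repackaging. First I would take the subset $C_\pi \subseteq S_\pi$ and the isomorphism $\hat{\gamma}\colon \call{H}_{\pi'} \lto \call{H}_\pi^{C_\pi}$ to be exactly those produced by Lemma \ref{lem:isomorphism}. The commuting triangle in that lemma records the factorisation $\gamma = \operatorname{inc} \circ \hat{\gamma}$, where $\gamma\colon \call{H}_{\pi'} \lto \call{H}_\pi$ is the map built in the Definition preceding Lemma \ref{lem:isomorphism} and $\operatorname{inc}\colon \call{H}_\pi^{C_\pi} \lto \call{H}_\pi$ is the inclusion of the codespace. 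This settles the existence of $\hat{\gamma}$ as an isomorphism onto $\call{H}_\pi^{C_\pi}$; it then remains only to transport the commuting squares and the bijection of Lemma \ref{lem:commuting_square}, which are phrased in terms of $\gamma$, across this factorisation so that they land in the codespace.

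For a fixed $g \in S_\pi \setminus C_\pi$, I would let $g' \in S_{\pi'}$ be the unique operator supplied by Lemma \ref{lem:commuting_square}, so that $g \circ \gamma = \gamma \circ g'$ as maps $\call{H}_{\pi'} \lto \call{H}_\pi$. The one extra observation needed is that $g$ preserves the codespace $\call{H}_\pi^{C_\pi}$: since $S_\pi$ is a commutative stabiliser group and $C_\pi \subseteq S_\pi$, the operator $g$ commutes with every $c \in C_\pi$, whence $c(g\varphi) = g(c\varphi) = g\varphi$ for every $C_\pi$-invariant $\varphi$, which is precisely the content of Remark \ref{remark:pres_invariant}. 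Thus $g$ restricts to an endomorphism of $\call{H}_\pi^{C_\pi}$ and $g \circ \operatorname{inc} = \operatorname{inc} \circ (g|_{\call{H}_\pi^{C_\pi}})$. Substituting $\gamma = \operatorname{inc} \circ \hat{\gamma}$ into the identity of Lemma \ref{lem:commuting_square} gives $\operatorname{inc} \circ (g|_{\call{H}_\pi^{C_\pi}}) \circ \hat{\gamma} = \operatorname{inc} \circ \hat{\gamma} \circ g'$, and cancelling the injective map $\operatorname{inc}$ on the left yields $(g|_{\call{H}_\pi^{C_\pi}}) \circ \hat{\gamma} = \hat{\gamma} \circ g'$, which is exactly the commuting square \eqref{eq:cut_model_square}.

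Uniqueness of $g'$ and the fact that $g \longmapsto g'$ is a bijection $S_\pi \setminus C_\pi \lto S_{\pi'}$ I would inherit verbatim from Lemma \ref{lem:commuting_square}: because $\hat{\gamma}$ and $\gamma$ differ only by the injective inclusion $\operatorname{inc}$, they determine the same $g'$, so the bijection is unchanged. I do not expect a genuine obstacle here, since the hard work is done in Lemmas \ref{lem:isomorphism} and \ref{lem:commuting_square} (both of which already treat the $a$-redex and $m$-redex cases). The only point demanding care is the descent of the commuting square from $\call{H}_\pi$ to the codespace $\call{H}_\pi^{C_\pi}$, and this hinges on the two facts noted above, namely that $g$ stabilises $\call{H}_\pi^{C_\pi}$ by commutativity of $S_\pi$, and that $\operatorname{inc}$ is injective so that it may be cancelled from the left of the transported identity.
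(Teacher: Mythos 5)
Your proposal is correct and takes essentially the same route as the paper, whose proof of this theorem is simply the citation ``By Lemma \ref{lem:isomorphism} and Lemma \ref{lem:commuting_square}.'' The only difference is that you spell out the descent of the commuting square from $\call{H}_\pi$ to $\call{H}_\pi^{C_\pi}$ via Remark \ref{remark:pres_invariant} and the injectivity of the inclusion, a step the paper leaves implicit.
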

\begin{proof}
	By Lemma \ref{lem:isomorphism} and Lemma \ref{lem:commuting_square}.
	\end{proof}


\section{Examples}\label{section:links_as_operators}

\begin{example}\label{ex:CatGoI_ex}
	
	Consider the following proof net, which we denote by $\pi$. The labels on the variables are artificial, so that $U_i$ denotes the (atomic) variable $U$ for all $1 \le i \le 10$, and each link is labelled.
	\[\begin{tikzcd}[column sep = small, row sep = small]
		& {\stackrel{l_{2}}{\ax}} &&&& {\stackrel{l_4}{\ax}} &&& {\stackrel{l_6}{\ax}} \\
		{\neg U_1} && {U_2} && {\neg U_3} && {U_4} & {\neg U_5} && {U_6} \\
		{\stackrel{l_1}{\operatorname{c}}} &&& {\stackrel{l_3}{\otimes}} &&& {\stackrel{l_5}{\operatorname{c}}} && {\stackrel{l_7}{\parr}} \\
		&&& {U_7 \otimes \neg U_8} &&&&& {\neg U_9 \parr U_{10}} \\
		&&&&&& {\stackrel{l_8}{\cut}}
		\arrow[curve={height=12pt}, no head, from=1-2, to=2-1]
		\arrow[curve={height=-12pt}, no head, from=1-2, to=2-3]
		\arrow[from=2-1, to=3-1]
		\arrow[curve={height=12pt}, no head, from=1-6, to=2-5]
		\arrow[curve={height=-12pt}, no head, from=1-6, to=2-7]
		\arrow[from=2-7, to=3-7]
		\arrow[curve={height=-12pt}, from=2-5, to=3-4]
		\arrow[curve={height=12pt}, from=2-3, to=3-4]
		\arrow[curve={height=12pt}, no head, from=1-9, to=2-8]
		\arrow[curve={height=-12pt}, no head, from=1-9, to=2-10]
		\arrow[curve={height=-12pt}, from=2-10, to=3-9]
		\arrow[curve={height=12pt}, from=2-8, to=3-9]
		\arrow[no head, from=3-9, to=4-9]
		\arrow[curve={height=-12pt}, from=4-9, to=5-7]
		\arrow[curve={height=12pt}, from=4-4, to=5-7]
		\arrow[no head, from=3-4, to=4-4]
	\end{tikzcd}\]
We now replace each link of $\pi$ with its set of unoriented atoms and erase the labels on the edges and replace them with their associated operators in the code:
\[\adjustbox{scale=0.70}{\begin{tikzcd}[column sep = tiny]
	& {\{U_1\} } &&&& {\{U_2\}} &&& {\{U_5\}} \\
	\varnothing && {\{X_1^{l_2}X_3^{l_3}\}} && {\{X_4^{l_3}X_2^{l_4}\}} && \varnothing & {\{X_5^{l_6}X_6^{l_7}\}} && {\{X_5^{l_6}X_7^{l_7}\}} \\
	&&& {\{U_3,U_4\}} &&&&& {\{U_6, U_7\}} \\
	&&& {\{X_3^{l_3}X_8^{l_3}, X_4^{l_3}X_9^{l_8}\}} &&&&& {\{X_6^{l_7}X_8^{l_3}, X_7^{l_7}X_9^{l_8}\}} \\
	&&&&&& {\{U_8, U_9\}}
	\arrow[curve={height=-12pt}, no head, from=1-2, to=2-3]
	\arrow[curve={height=12pt}, no head, from=1-6, to=2-5]
	\arrow[curve={height=-12pt}, from=2-5, to=3-4]
	\arrow[curve={height=12pt}, from=2-3, to=3-4]
	\arrow[curve={height=12pt}, no head, from=1-9, to=2-8]
	\arrow[curve={height=-12pt}, no head, from=1-9, to=2-10]
	\arrow[curve={height=-12pt}, from=2-10, to=3-9]
	\arrow[curve={height=12pt}, from=2-8, to=3-9]
	\arrow[no head, from=3-9, to=4-9]
	\arrow[curve={height=-12pt}, from=4-9, to=5-7]
	\arrow[curve={height=12pt}, from=4-4, to=5-7]
	\arrow[no head, from=3-4, to=4-4]
	\arrow[curve={height=12pt}, from=1-2, to=2-1]
	\arrow[curve={height=-12pt}, from=1-6, to=2-7]
\end{tikzcd}}\]
Associated to each of these links is a Hilbert space, in the following we write $\psi_i$ for $\psi_{U_i}$ for $i = 1,...,9$.
\begin{align*}
	\call{H}_{l_1} &= \bb{C} & \call{H}_{l_2} &= \bigwedge \bb{C}\psi_{1} & \call{H}_{l_3} &= \bigwedge (\bb{C}\psi_{3} \oplus \bb{C}\psi_4)\\
	\call{H}_{l_4} &= \bigwedge \bb{C}\psi_{2} & \call{H}_{l_5} &= \bb{C} & \call{H}_{l_6} &= \bigwedge \bb{C}\psi_{5}\\
	\call{H}_{l_7} &= \bigwedge \bb{C}(\psi_{6} \oplus \psi_7) & \call{H}_{l_8} &= \bigwedge (\bb{C} \psi_8 \oplus \bb{C} \psi_9)
	\end{align*}
These fit together to give the following
\begin{align*}
	\call{H}_\pi &= \call{H}_{l_1} \otimes \call{H}_{l_2} \otimes \call{H}_{l_3} \otimes \call{H}_{l_4} \otimes \call{H}_{l_5} \otimes \call{H}_{l_6} \otimes \call{H}_{l_7} \otimes \call{H}_{l_8}\\
	&\cong \bigotimes_{i = 1}^9\bigwedge \bb{C}\psi_i
	\end{align*}
Hence, the stabilisers of $\pi$ are given as follows.
\begin{align}
	\begin{split}
		S_\pi = \lbrace &X_{1}^{l_2}X^{l_3}_3, X_4^{l_3}X_2^{l_4}, X^{l_3}_3X^{l_3}_8, X^{l_3}_4X^{l_8}_9, X^{l_7}_6X^{l_3}_8, X^{l_7}_7X^{l_8}_9, X^{l_6}_{5}X^{l_7}_7, X^{l_6}_5X^{l_7}_{6}\rbrace
	\end{split}
	\end{align}
Our model of $\pi$ is $\llbracket \pi \rrbracket = (\call{H}_\pi, S_\pi)$.

In general, given a quantum error-correcting code $(\call{H}, S)$, if the codespace $\call{H}^S$ admits $n$ independent operators and the state space $\call{H}$ is $2^m$ dimensional, then the dimension of $\call{H}^S$ is $2^{m - n}$ \cite[Page 456, Proposition 10.5]{nc}. In the current situation, the total space has dimension $2^9$ and there are $8$ independent operators in $S$. Hence the codespace $\operatorname{Code}\pi$ is $2$ dimensional, indeed it is equal to the following span.
	\begin{equation}
		\operatorname{Code}\pi = \operatorname{Span}\big\lbrace \ket{+++++++++}, \ket{---------}\big\rbrace
	\end{equation}
\end{example}

\begin{example}\label{ex:cording_errors}
	Consider the following proof net $\pi$ whose red edges form an $m$-redex which when reduced yields the consequential proof net which we label $\pi'$.

\begin{equation*}\adjustbox{scale=0.90}{\begin{tikzcd}[column sep = small, row sep = small]
	& {\stackrel{l_1}{\ax}} &&&& {\stackrel{l_2}{\ax}} &&& {\stackrel{l_3}{\ax}} \\
	{\neg W_1} && W_2 && {\neg W_9} && W_{10} & {\neg W_5} && W_6 \\
	{\operatorname{c}} &&& {\stackrel{l_4}{\otimes}} &&& {\operatorname{c}} && {\stackrel{l_5}{\parr}} \\
	&&& {W_3 \otimes \neg W_8} &&&&& {\neg W_4 \parr W_7} \\
	&&&&&& {\stackrel{l_6}{\cut}}
	\arrow[curve={height=12pt}, no head, from=1-2, to=2-1]
	\arrow[draw={rgb,255:red,253;green,53;blue,73}, curve={height=-12pt}, no head, from=1-2, to=2-3]
	\arrow[from=2-1, to=3-1]
	\arrow[draw={rgb,255:red,253;green,53;blue,73}, curve={height=12pt}, no head, from=1-6, to=2-5]
	\arrow[curve={height=-12pt}, no head, from=1-6, to=2-7]
	\arrow[from=2-7, to=3-7]
	\arrow[draw={rgb,255:red,253;green,53;blue,73}, curve={height=-12pt}, from=2-5, to=3-4]
	\arrow[draw={rgb,255:red,253;green,53;blue,73}, curve={height=12pt}, from=2-3, to=3-4]
	\arrow[draw={rgb,255:red,253;green,53;blue,73}, curve={height=12pt}, no head, from=1-9, to=2-8]
	\arrow[draw={rgb,255:red,253;green,53;blue,73}, curve={height=-12pt}, no head, from=1-9, to=2-10]
	\arrow[draw={rgb,255:red,253;green,53;blue,73}, curve={height=-12pt}, from=2-10, to=3-9]
	\arrow[draw={rgb,255:red,253;green,53;blue,73}, curve={height=12pt}, from=2-8, to=3-9]
	\arrow[draw={rgb,255:red,253;green,53;blue,73}, no head, from=3-9, to=4-9]
	\arrow[draw={rgb,255:red,253;green,53;blue,73}, curve={height=-12pt}, from=4-9, to=5-7]
	\arrow[draw={rgb,255:red,253;green,53;blue,73}, curve={height=12pt}, from=4-4, to=5-7]
	\arrow[draw={rgb,255:red,253;green,53;blue,73}, no head, from=3-4, to=4-4]
\end{tikzcd}}
\end{equation*}
\begin{equation*}
\begin{tikzcd}[column sep = small, row sep = small]
		& {\stackrel{l_1}{\ax}} &&&& {\stackrel{l_2}{\ax}} &&&& {\stackrel{l_3}{\ax}} \\
		{\neg W_1} && W_2 && {\neg W_3} && W_4 && {\neg W_5} && W_6 \\
		{\operatorname{c}} &&& {\stackrel{l_1'}{\cut}} &&&& {\stackrel{l_2'}{\cut}} &&& {\operatorname{c}}
		\arrow[curve={height=12pt}, no head, from=1-2, to=2-1]
		\arrow[curve={height=-12pt}, no head, from=1-2, to=2-3]
		\arrow[from=2-1, to=3-1]
		\arrow[curve={height=12pt}, no head, from=1-10, to=2-9]
		\arrow[curve={height=-12pt}, no head, from=1-10, to=2-11]
		\arrow[from=2-11, to=3-11]
		\arrow[curve={height=12pt}, no head, from=1-6, to=2-5]
		\arrow[curve={height=-12pt}, no head, from=1-6, to=2-7]
		\arrow[curve={height=12pt}, from=2-3, to=3-4]
		\arrow[curve={height=-12pt}, from=2-5, to=3-4]
		\arrow[curve={height=-12pt}, from=2-9, to=3-8]
		\arrow[curve={height=12pt}, from=2-7, to=3-8]
	\end{tikzcd}
	\end{equation*}
Both $\pi$ and $\pi'$ have a unique linear qubit ordering $i < j \Rightarrow W_i < W_j$, for both proof nets. We begin by considering the maps
\begin{align*}
	\bigwedge \bb{C} \psi^{1'} &\lto \bigwedge \bb{C} \psi^5_1 \otimes \bigwedge \bb{C} \psi^{6}_{1} \otimes \bigwedge \bb{C} \psi^{4}_1\\
	\bigwedge \bb{C} \psi^{2'} &\lto \bigwedge \bb{C} \psi^4_1 \otimes \bigwedge \bb{C} \psi_1^6 \otimes \bigwedge \bb{C} \psi_1^5
	\end{align*}
both defined by linearity along with the rule
\begin{equation}
	\ket{j} \longmapsto \frac{1}{2}(\ket{+++} + (-1)^j\ket{---})
	\end{equation}
Taking the tensor of the identity functions for the remaining links and composing with the canonical isomorphisms we obtain a morphism $\gamma: \call{H}_{\pi'} \lto \call{H}_{\pi}$.

The set $C_{\pi}$ is
\begin{equation}
	C_{\pi} = \{ X_1^{4}X_1^6, X_2^4X_2^6, X_1^5X_1^6, X_2^5X_2^6 \}
	\end{equation}
and we see that the image of $\gamma$ is contained within the subspace $\call{H}_{\pi}^{C_\pi}$ as predicted by Lemma \ref{lem:isomorphism}.

Furthermore, the operator $X^1X^4_1$ is an element of $S_{\pi} \setminus C_{\pi}$. As predicted by theorem \ref{thm:cut_model}, there exists a unique operator $X^1X^{1'}$ in $S_{\pi'}$ which makes \eqref{eq:cut_model_square} commute.
\end{example}

\appendix

\section{Majorana chains}
Majorana fermions and ordinary (Dirac) fermions are distinguished by whether their creation and anihilation operators $\gamma^\dagger, \gamma$ are self-adjoint $\gamma = \gamma^\dagger$ (Majorana) or not $\gamma \neq \gamma^\dagger$ (Dirac). There is a standard way to create Majorana quasi-particles from ordinary fermions, and ordinary fermionic quasi-particles from Majorana fermions. In the notation of \cite{LB} the Majorana chain is constructed from $L$ Dirac fermion creation and annihilation operators $c_j, c_j^\dagger$ ($1 \leq j \leq L$).
\[\begin{tikzcd}
	1 & 2 & \hdots & j & \hdots & L \\
	\bullet & \bullet && \bullet && \bullet \\
	{c_1, c_1^\dagger} & {c_2, c_2^\dagger} & \hdots & {c_j, c_j^\dagger} & \hdots & {c_L, c_L^\dagger}
	\arrow[no head, from=2-1, to=2-2]
	\arrow[no head, from=2-2, to=2-4]
	\arrow[no head, from=2-4, to=2-6]
\end{tikzcd}\]
and has Hamiltonian
\begin{equation}
	H_{\operatorname{MC}} = \sum_{i = 1}^{L-1}(c_i^\dagger c_{i+1} + c_{i+1}^\dagger c_i + c_ic_{i+1} + c_{i+1}^\dagger c_i^\dagger)
	\end{equation}
This Hamiltonian can be expressed more simply by defining:
\begin{equation}
	\gamma_{2j}^{\operatorname{LB}} = i(c_j^\dagger - c_j)\quad \gamma_{2j - 1}^{\operatorname{LB}}\quad 1 \leq j \leq L \quad S_j = -i\gamma_{2j}^{\operatorname{LB}}\gamma_{2j+1}^{\operatorname{LB}}
	\end{equation}
so that
\begin{equation}
	H_{\operatorname{MC}} = -\sum_{j = 1}^{L-1}S_j
	\end{equation}
Notice that $\gamma_{2j}^{\operatorname{LB}}, \gamma_{2j-1}^{\operatorname{LB}}$ are Majorana fermions $(\gamma_{2j}^{\operatorname{LB}})^\dagger = \gamma_{2j-1}^{\operatorname{LB}}$.
			
\begin{lemma}
	The operator $S_j$ is the stabiliser $X_jX_{j+1}$ of the quantum wire.
	\end{lemma}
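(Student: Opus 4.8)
The plan is to reduce $S_j$ to a product of Pauli operators via the Jordan--Wigner identities of Section \ref{section:ext_qubit} and then collapse the resulting string of $Z$'s using the Pauli algebra. Everything is a direct computation once the dictionary between Dirac/Majorana operators and the operators $\psi_i, \psi_i^*$ is in place.

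First I would identify the creation and annihilation operators with the fermionic operators on the exterior algebra, writing $c_j^\dagger = \psi_j$ and $c_j = \psi_j^*$ as in the Remark naming $\psi_i$ a creation and $\psi_i^*$ an annihilation operator. Under this identification the defining formula becomes $\gamma_{2j}^{\operatorname{LB}} = i(\psi_j - \psi_j^*)$, and the standard companion is $\gamma_{2j+1}^{\operatorname{LB}} = \gamma_{2(j+1)-1}^{\operatorname{LB}} = c_{j+1} + c_{j+1}^\dagger = \psi_{j+1} + \psi_{j+1}^*$. Substituting the identities \eqref{eq:iso_sneak_1} and \eqref{eq:iso_sneak_3} then gives
\[
\gamma_{2j}^{\operatorname{LB}} = -i\, Z_1 \cdots Z_j X_j\,, \qquad \gamma_{2j+1}^{\operatorname{LB}} = Z_1 \cdots Z_j X_{j+1}\,.
\]

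Next I would multiply these out and keep the scalar factors together. Since the defining prefactor $-i$ combines with the $-i$ coming from $\gamma_{2j}^{\operatorname{LB}}$ to give $(-i)(-i) = -1$, one finds
\[
S_j = -i\, \gamma_{2j}^{\operatorname{LB}}\gamma_{2j+1}^{\operatorname{LB}} = -\, Z_1 \cdots Z_j X_j Z_1 \cdots Z_j X_{j+1}\,.
\]
The remaining point is to show $Z_1 \cdots Z_j X_j Z_1 \cdots Z_j = -X_j$. Because $X_j$ commutes with $Z_k$ for $k < j$ and anticommutes with $Z_j$, I would slide the central $X_j$ through the second $Z$-string, collapse all squares using $Z_k^2 = 1$, and pick up exactly one sign from $X_j Z_j = -Z_j X_j$. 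The operator $X_{j+1}$ on qubit $j+1$ commutes with everything in the string and simply rides along, so $S_j = -(-X_j)X_{j+1} = X_j X_{j+1}$, which is the stabiliser of the quantum wire.

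The only real obstacle is bookkeeping of signs: one must track the factor $(-i)^2 = -1$, the Koszul sign appearing in \eqref{eq:iso_sneak_3}, and the single anticommutation sign incurred when the two $Z$-strings are collapsed. There is no conceptual difficulty; the task is to verify that these three sources of signs conspire to leave an overall $+1$ in front of $X_j X_{j+1}$.
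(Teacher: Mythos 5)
Your proof is correct, and it takes a genuinely different route from the one in the paper. The paper's proof evaluates $S_j$ directly on basis kets $\ket{ab}$, ``for simplicity'' restricting to a pair of qubits, and tracks the sign $(-1)^a$ that appears when $c_{j+1}^\dagger + c_{j+1}$ is anticommuted past $(c_j^\dagger)^a$; the Jordan--Wigner string never appears explicitly because the computation is localised to the two sites $j, j+1$. You instead work at the level of operators on the whole chain, substituting the identities \eqref{eq:iso_sneak_1} and \eqref{eq:iso_sneak_3} to get $\gamma_{2j}^{\operatorname{LB}} = -i\,Z_1\cdots Z_j X_j$ and $\gamma_{2j+1}^{\operatorname{LB}} = Z_1\cdots Z_j X_{j+1}$, and then cancelling the two $Z$-strings against each other at the cost of the single anticommutation sign $X_j Z_j = -Z_j X_j$, which combines with $(-i)(-i) = -1$ to leave $+X_jX_{j+1}$. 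Your version has the advantage of being a complete argument for the full $L$-site wire and of making visible exactly why the nonlocal strings cancel out of $S_j$, whereas the paper's basis computation is shorter but leaves the reduction to two qubits implicit. One small point worth flagging: the paper's displayed definition actually omits the formula for $\gamma_{2j-1}^{\operatorname{LB}}$; your choice $\gamma_{2j-1}^{\operatorname{LB}} = c_j + c_j^\dagger$ is the standard convention and is exactly what the paper's own proof uses when it expands $\gamma_{2j+1}^{\operatorname{LB}}$ as $c_{j+1}^\dagger + c_{j+1}$, so your inference is consistent with the intended definition.
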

\begin{proof}
	For simplicity, we consider the case of a pair of qubits $\bb{C}^2 \otimes \bb{C}^2$. Then
	\begin{equation}
		\ket{10} = c_j^\dagger \ket{00},\quad \ket{01} = c_{j+1}^\dagger \ket{00},\quad \ket{11} = c_j^\dagger c_{j+1}^\dagger \ket{00}
		\end{equation}
	So with $a,b \in \{ 0,1 \}$ we have
	\begin{equation}
		\ket{ab} = (c_j^\dagger)^a(c_{j+1}^\dagger)^b \ket{00}
		\end{equation}
	Thus:
	\begin{align*}
		\gamma_{2j+1}^{\operatorname{LB}}\ket{ab} &= (c_{j+1}^\dagger + c_{j+1})(c_j^\dagger)^a(c_{j+1}^\dagger)^b \ket{00}\\
		&= (-1)^a(c_j^\dagger)^a(c_{j+1} + c_{j+1})(c_{j+1})^b \ket{00}\\
		&= (-1)^a\ket{a\overline{b}}
		\end{align*}
	where $\overline{0} = 1, \overline{1}=0$.
	
	Similarly, $\gamma_{2j}^{\operatorname{LB}}\ket{ab} = i(c_j^\dagger - c_j)\ket{ab} = i(-1)^a \ket{\overline{a}b}$. Hence:
	\begin{align*}
		S_j\ket{ab} &= -i\gamma_{2j}^{\operatorname{LB}}\gamma_{2j+1}^{\operatorname{LB}}\ket{ab}\\
		&= -i \gamma_{2j}^{\operatorname{LB}}(-1)^a\ket{a\overline{b}}\\
		&= \ket{\overline{a}\overline{b}}
		\end{align*}
	\end{proof}

\bibliographystyle{amsalpha}
\providecommand{\bysame}{\leavevmode\hbox to3em{\hrulefill}\thinspace}
\providecommand{\href}[2]{#2}

\end{document}